\def\thm@space@setup{%
	\thm@preskip=\parskip \thm@postskip=0pt
}
\theoremstyle{plain} \numberwithin{equation}{section}
\newtheorem{theorem}{Theorem}[section]
\newtheorem{corollary}[theorem]{Corollary}
\newtheorem*{conjecture*}{Conjecture}
\newtheorem{lemma}[theorem]{Lemma}
\newtheorem*{question*}{Question} \topmargin-2cm
\theoremstyle{definition}
\newtheorem{definition}[theorem]{Definition}
\newtheorem{remark}[theorem]{Remark}
\DeclareMathOperator{\Hessian}{Hess}
\DeclareMathOperator{\distance}{dist}
\renewcommand{\Re}{\operatorname{Re}}
\begin{document}
	
	\title{The intrinsic geometry on bounded pseudoconvex domains}

	\author{
		Bingyuan Liu\\ bingyuan@ucr.edu
	}

	\date{\today}
	
\makeatletter
\newcommand{\subjclass}[2][1991]{%
	\let\@oldtitle\@title%
	\gdef\@title{\@oldtitle\footnotetext{#1 \emph{Mathematics subject classification.} #2}}%
}
\newcommand{\keywords}[1]{%
	\let\@@oldtitle\@title%
	\gdef\@title{\@@oldtitle\footnotetext{\emph{Key words and phrases.} #1.}}%
}
\makeatother

	\subjclass[2010]{Primary 32U05; Secondary 53C21}
	
		\maketitle
	
	\begin{abstract}
		The Diederich--Forn\ae ss index has been introduced since 1977 to classify bounded pseudoconvex domains. In this article, we derive several intrinsic, geometric conditions on boundary of domains for  arbitrary indexes. Many results, in the past, by various mathematicians estimated the index by assuming some properties of domains. Our motivation of this paper is, the other way around, to look for how the index effects properties and shapes of domains. Especially, we look for a necessary condition of all bounded pseudoconvex domains $\Omega\subset\mathbb{C}^2$ with the Diederich--Forn\ae ss index 1. We also show that, when the Levi-flat set of $\partial\Omega$ is a closed Riemann surface, then the necessary condition can be simplified. 
	\end{abstract}
	
	\section{Introduction}\label{sec0}
	
	Let $\Omega$ be a bounded domain in $\mathbb{C}^n$ with smooth boundary and \[\delta(z):=\begin{cases}
	-\distance(z, \partial\Omega) & z\in\Omega\\
	\distance(z, \partial\Omega) & \text{otherwise}.
	\end{cases}\] The $\Omega$ is said to be pseudoconvex if $-\log(-\delta(z))$ is plurisubharmonic in $\Omega$. Note that $-\log(-\delta(z))$ blows up whenever $z$ approaches the boundary $\partial\Omega$. Indeed, all bounded pseudoconvex domains with $C^2$ boundary admit bounded (strictly) plurisubharmonic functions which vanishes on $\partial\Omega$ was shown by Diederich--Forn\ae ss \citep{DF77b}. They proved that any relatively compact pseudoconvex domain in Stein manifolds admits a defining function $\rho$ such that $-(-\rho)^\eta$ is (strictly) plurisubharmonic in $\Omega$ for some $\eta\in(0, 1]$. The author also remark that the $\rho$ may not be $\delta$ in general. For the pseudoconvex domain in complex manifolds, see Range \citep{Ra81}.
	
	For the bounded pseudoconvex domain $\Omega$ with $C^1$ boundary, Kerzman--Rosay constructed a smooth (strictly) plurisubharmonic function in \citep{KR81}. This function also vanishes on the boundary $\partial\Omega$. Later, Demailly in \citep{De87} improved the result of Kerzman--Rosay to arbitrary bounded pseudoconvex domains in $\mathbb{C}^n$ with Lipschitz boundary. He also showed his plurisubharmonic smooth function $\phi$ is bounded above and below by a multiple of $-\frac{1}{\log(-\delta)}$ near the boundary. In \citep{Ha08}, Harrington found a new smooth (strictly) plurisubharmonic $\phi$ in bounded pseudoconvex domains in $\mathbb{C}^n$ with Lipschitz boundary. In this paper, the $\phi$ is H\"{o}lder continuous on the boundary $\partial\Omega$. In \citep{Ha15}, he also obtained some results about pseudoconvex domain with Lipschitz boundary in $\mathbb{CP}^n$. 
	
	If the boundary is smooth or at least $C^2$, then the above result of Diederich--Forn\ae ss is applicable. In particular, the fact that $-(-\rho)^{\eta_0}$ is (strictly) plurisubharmonic will implies that $-(-\rho)^{\eta}$ is (strictly) plurisubharmonic for all $0<\eta<\eta_0$. It is of great interest, on a given domain to optimize the exponent in $-(-\rho)^\eta$ of plurisubharmonicity. 
	
	We now introduce the Diederich--Forn\ae ss index.
	
	\begin{definition}\label{df}
		Let $\Omega$ be a bounded, pseudoconvex domain in $\mathbb{C}^n$. The number 
		$0 < \tau_\rho < 1$ is called a {\it Diederich--Forn\ae ss exponent} if there exists a smooth defining 
		function $\rho$ of $\Omega$ so that $-(-\rho)^{\tau_\rho}$ is plurisubharmonic. 
		The index 
		$$
		\eta:=\sup \tau_\rho \, ,
		$$
		is 
		called the {\it Diederich--Forn\ae ss index} of the domain $\Omega$, where the supremum is taken over all smooth defining functions of $\Omega$.
	\end{definition}
	
	The Diederich--Forn\ae ss index is independent of the defining functions and only depends on the domain $\Omega$ itself. It is also known to be closely related to the regularity properties of $\overline{\partial}$-Neumann problem and the existence of Stein neighborhood basis. For these topics, see the book written by Straube \citep{St10} and the survey of Boas-Straube \citep{BS99}.
	
	The papers of Gallagher--McNeal in \citep{HM12a} and \citep{HM12b} include some interesting results in the fashion of Diederich--Forn\ae ss index for real variables. (Gallagher was previously known as Herbig.)
	
	The Diederich--Forn\ae ss index can be a number on $(0, 1]$. Whether the index reflects geometric boundary properties is not understood and in fact results of this nature seem to be unknown.  \c{S}ahuto\u{g}lu--Straube \citep{SS06} considered the compactness of the $\overline{\partial}$-Neumann operator in the similar fashion. 
	
	We briefly mention a few motivations about our study of intrinsic geometry related to the Diederich--Forn\ae ss index. In \citep{FH07}, Forn\ae ss--Gallagher proved a sufficient condition of the Diederich--Forn\ae ss index being 1. Their theorem asserts that if a defining function of a given domain is plurisubharmonic on the boundary, then the Diederich--Forn\ae ss index is 1. However, the converse is known not to be true. That is, if a domain has Diederich--Forn\ae ss index 1, it does not necessarily admit a defining function which is plurisubharmonic on the boundary. The counterexample was given in Behrens \citep{Be85}. 
	The theorem of Forn\ae ss--Gallagher in \citep{FH07} has been recently extended by Krantz--Liu--Peloso in \citep{KLP16}, where they showed that the sufficient condition of the Diederich--Forn\ae ss index being 1 is that, $\Hessian_\rho(L, N)=0$  on Levi-flat sets for some defining function $\rho$. Their theorem covers the example of Behrens mentioned above. However, it is still not known if $\Hessian_\rho(L, N)=0$ on Levi-flat sets is a equivalent condition for the domain admitting the Diederich--Forn\ae ss index 1. Inspired by these, we study the necessary condition of a specific Diederich-Forn\ae ss index.
	
	Mathematicians also obtained necessary conditions of the Diederich--Forn\ae ss exponents. For example, Kohn in \citep{Ko99} proved a result about the implication of the Diederich--Forn\ae ss exponent related to the boundness of the Bergman projection. Moreover, in 2000, Berndtsson--Charpentier showed in \citep{BC00} that the Bergman projection $P$ on $\Omega_\beta$ does map Sobolev space $W^k(\Omega_\beta)$ into $W^\kappa(\Omega_\beta)$ when $\kappa< \tau/2$ where $\tau$ is a Diederich--Forn\ae ss exponent. With a different point of view, we hope to discover a necessary condition on the geometry of domains. This geometry should be understood as a intrinsic property attached to boundary of domains.
	
	Motivated by the result of Krantz--Liu--Peloso, where one can see that $\frac{\|\nabla\rho\|}{\Hessian_\rho(L, N)}=\infty$ implies the index is 1, we raise the following natural questions:
	
	\begin{question*}
		\begin{enumerate}
			\item Let $\eta_0\in(0, 1]$. Can one find a necessary condition for the Diederich--Forn\ae ss index being $\eta_0$. The conditions should be intrinsic to the Levi-flat sets.
			\item What is the necessary condition for the case $\eta_0=1$? It is also interesting to compare it with the sufficient condition that $\frac{\|\nabla\rho\|}{\Hessian_\rho(L, N)}=\infty$ on Levi-flat sets. If they are different, which additional condition can be added to make they look similar.
		\end{enumerate}
	\end{question*}

	In this article, we consider these questions from the viewpoint of geometric analysis. To answer Question 1, we prove two main theorems as what follows. They are particularly useful for the case that Diederich--Forn\ae ss index is 1. The first necessary condition reveals how the term $\frac{\|\nabla\rho\|}{\Hessian_\rho(L, N)}$ behaves on Levi-flat sets for defining functions $\rho$ of $\Omega$. We sometimes call this term \textit{torsion} throughout the article. Here the notation of $\Hessian_\rho(L, N)$ denotes the complex Hessian on the direction of $L$ and $N$ (see Section \ref{sec1} for the definition of $L$ and $N$). The proof can be found in Section \ref{1st} and the definitions of notations can be found in Section \ref{sec1}.
	
	\begin{theorem}[The first necessary condition]\label{first condition}
		Let $\Omega$ be a bounded pseudoconvex domain with smooth boundary in $\mathbb{C}^2$ and $\Sigma$ be the Levi-flat set of $\partial\Omega$. If the Diederich--Forn\ae ss index is $\eta_0$, then for any $\eta<\eta_0$, there exists a smooth defining functions $\rho$ such that \[ 2.5+3.75C\frac{\|\nabla\rho\|}{\left|\Hessian_{\rho}(N, L)\right|}+0.5\left|L\left(\frac{\|\nabla\rho\|}{\Hessian_{\rho}(N, L)}\right)\right|\geq \frac{1}{1-\eta}\]on $\Sigma$, where 
		\[		C:=\max\lbrace
		\max_{z\in\Sigma}|g(\nabla_N\overline{L}, \overline{N})|,
		\max_{z\in\Sigma}|g(\nabla_{\overline{L}} L, L)|,
		\max_{z\in\Sigma}|g([N, L], N)|,
		\max_{z\in\Sigma}|g(\nabla_{\overline{L}}N, N)|\rbrace.\]
		
		Particularly, if the Diederich--Forn\ae ss index is 1, then there exists a sequence of defining functions $\rho_j$ so that $\frac{\|\nabla\rho_j\|}{\Hessian_{\rho_j}(N, L)}$ diverges to $\infty$ on $\Sigma$ in the holomorphic $C^1$ norm.
	\end{theorem}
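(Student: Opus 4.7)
The plan is to use the defining property of the Diederich--Forn\ae ss exponent to produce a pointwise plurisubharmonicity inequality in $\Omega$, test it against a carefully chosen $(1,0)$-vector field, and then push the resulting estimate to the Levi-flat set $\Sigma$ by a one-parameter limiting argument along the transverse direction.

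For each $\eta<\eta_0$, I first select a smooth defining function $\rho$ for which $-(-\rho)^\eta$ is plurisubharmonic in $\Omega$. A direct expansion of $i\partial\bar\partial(-(-\rho)^\eta)$ and evaluation on an arbitrary $(1,0)$-vector $X$ yields the fundamental pointwise inequality
\[
(-\rho)\,\Hessian_\rho(X,\bar X)+(1-\eta)|\partial\rho(X)|^{2}\geq 0 \qquad \text{in }\Omega.
\]
I would apply this with a test field $X=N+aL$ where $a$ is an as-yet undetermined complex-valued function. The left-hand side becomes a Hermitian quadratic form in $a$ whose nonnegativity at each point (where the coefficient of $|a|^2$ is positive) is equivalent to a discriminant inequality relating $\Hessian_\rho(N,\bar L)$, the gradient term $\|\nabla\rho\|$, and the small quantities $\rho$, $L\rho$, and $\Hessian_\rho(L,\bar L)$, all of which vanish on $\Sigma$.

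I would then fix $p\in\Sigma$ and approach it from inside $\Omega$ along the flow of the real normal $N+\bar N$ with parameter $t\to 0^{+}$, and Taylor-expand. Using $\rho=-t\|\nabla\rho\|+O(t^{2})$, together with $L\rho=O(t)$ and $\Hessian_\rho(L,\bar L)=O(t)$, while $\Hessian_\rho(N,L)$ and $\|\nabla\rho\|$ stay bounded away from zero, the leading order of the discriminant inequality produces the target estimate on $\Sigma$. The torsion $T:=\|\nabla\rho\|/\Hessian_\rho(N,L)$ appears naturally as the ratio that solves the completed-square minimum, while the derivative term $|L(T)|$ is produced when the $L$-derivative of the first-order Taylor coefficient of $\Hessian_\rho(L,\bar L)$ or of $L\rho$ is re-expressed, via the structure equations of the frame $\{L,N\}$, as an $L$-derivative of $T$. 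The four frame-curvature quantities inside the constant $C$ arise from the commutator $[N,L]$ and the covariant derivatives $\nabla_N\bar L$, $\nabla_{\bar L}L$, $\nabla_{\bar L}N$ that appear when mixed second derivatives are exchanged during the expansion. A final Cauchy--Schwarz with carefully tuned weights should combine these pieces into the stated coefficients $2.5$, $3.75C$, and $0.5$.

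The particular case $\eta_0=1$ follows by sending $\eta\to 1^{-}$: the right-hand side $1/(1-\eta)$ diverges, so for a corresponding sequence of defining functions $\rho_j$ either $T$ or its $L$-derivative must blow up on $\Sigma$, which is the asserted divergence in holomorphic $C^1$ norm. The principal obstacle I expect lies in the algebraic bookkeeping of the intermediate steps: one must choose $a$ so that the cross-term $2\Re(a\,\Hessian_\rho(L,\bar N))$ combines with the gradient contribution to yield $L(T)$ with the sharp coefficient $0.5$, and then, after applying frame identities, identify every remaining geometric error as a multiple of the single constant $C$ with coefficient $3.75$, rather than an uncontrolled accumulation of derivatives of $\rho$. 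Getting the constants to be genuine absolute numbers (independent of $\rho$) is the critical technical step.
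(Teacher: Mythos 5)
Your proposal follows essentially the same route as the paper: expand the Levi form of $-(-\rho)^\eta$, reduce positivity of the resulting quadratic form in the frame $\{L,N\}$ to a discriminant inequality, pass to $\Sigma$ along the normal direction using $\rho=O(t)$, $L\rho=O(t)$, $\Hessian_\rho(L,\overline{L})=O(t)$, and absorb the frame-curvature errors into the constant $C$, with the $|L(T)|$ term arising from the $L$-derivative hidden in the transverse limit of $\Hessian_\rho(L,L)/(-\rho)$. The one point you gloss over is that $\Hessian_\rho(N,L)$ need not be bounded away from zero on $\Sigma$; where it vanishes the asserted inequality holds trivially because the torsion is $+\infty$, which is exactly how the paper disposes of that case before carrying out the computation you outline.
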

	Here, $\|\nabla\rho\|^2$ denotes $g(\nabla\rho,\nabla\rho)$ where $g$ is the Euclidean metric.
	
	\begin{remark}
		\begin{enumerate}
			\item $C=C(\Sigma)$ is a constant depending on the Levi-flat set $\Sigma$ of $\partial\Omega$. 
			\item It is not difficult to see that $\frac{\|\nabla\rho\|}{\Hessian_{\rho}(N, L)}$ is of scaling invariance. That is, \[\frac{\|\nabla(K\cdot\rho)\|}{\Hessian_{K\cdot\rho}(N, L)}=\frac{\|\nabla\rho\|}{\Hessian_{\rho}(N, L)}\] for $K>0$. Indeed, $\frac{\|\nabla\rho\|}{\Hessian_{\rho}(N, L)}$ plays a key role in Theorem \ref{1st}. It is also critical in \citep{KLP16}.
			\item Combining Theorem \ref{first condition} with the results in \citep{KLP16}, we see that the Diederich--Forn\ae ss index being 1 essentially means how well the torsion is.
		\end{enumerate}
	\end{remark}
	
	The second necessary condition looks more deeply into the defining functions $\rho$ of $\Omega$. Indeed, each of defining functions $\rho$ can be written as $\rho=\delta e^\psi$, where $\psi$ is an arbitrary smooth function. Different $\psi$ gives a different defining function $\rho$ and hence a different Diederich--Forn\ae ss exponent. The second necessary condition aims at revealing the way that how $\psi$ effects its Diederich--Forn\ae ss exponents. The proof can be found in Section \ref{2nd}.
	
	\begin{theorem}[The second necessary condition]\label{second condition}
		Let $\Omega$ be a bounded pseudoconvex domain with smooth boundary in $\mathbb{C}^2$ and $\Sigma$ be the Levi-flat set of $\partial\Omega$. If the Diederich--Forn\ae ss index is $\eta_0$, then for any $\eta<\eta_0$, there is a real smooth function $\psi$ defined on a neighborhood of $\Sigma$ such that on all points of $\Sigma$ either
		\[\frac{1}{1-\eta}-1\leq\frac{C_2-\frac{1}{4}\Hessian_\psi(L, L)}{|\frac{1}{2}\overline{L}\psi+\Hessian_\delta(N, L)|^2}+\frac{C_1}{|\frac{1}{2}\overline{L}\psi+\Hessian_\delta(N, L)|}\] or\[|\frac{1}{2}\overline{L}\psi+\Hessian_\delta(N, L)|=0,\]
		where 
		\[C_1:=2C+2\max_{\Sigma}|\Hessian_\delta(N, L)|
		\quad \text{	and }\quad	C_2:=\frac{1}{2}\max_{\Sigma}\left(\Re(-L\Hessian_\delta(N,L)+\Hessian_\delta(N, \nabla_{\overline{L}}L))\right).\]
	\end{theorem}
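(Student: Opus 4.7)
The plan is to derive the stated inequality from the same starting point as Theorem \ref{first condition}: the hypothesis that the Diederich--Forn\ae ss index equals $\eta_0$ provides, for each $\eta<\eta_0$, a smooth defining function $\rho$ for which $-(-\rho)^{\eta}$ is plurisubharmonic on $\Omega$. A direct computation of $\partial\bar\partial(-(-\rho)^\eta)$ rephrases this as the pointwise Hermitian positivity
$$(-\rho)\,\Hessian_\rho(X,\bar X)+(1-\eta)|X\rho|^{2}\ \geq\ 0\qquad\text{for every }(1,0)\text{-vector }X\text{ on }\Omega.$$
Writing $\rho=\delta e^{\psi}$ puts the defining function in the canonical form appearing in the statement, with $\psi$ the smooth parameter whose role the theorem is designed to describe.

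I will apply this positivity to the $2\times 2$ Hermitian form $H_{XY}:=(-\rho)\Hessian_\rho(X,\bar Y)+(1-\eta)(X\rho)(\overline{Y\rho})$ with $X,Y\in\{L,N\}$ and exploit the principal-minor inequality $H_{LL}H_{NN}\geq |H_{LN}|^{2}$. Using $\|\nabla\delta\|=1$, $N\delta=\tfrac12$, $\bar L\delta=0$ on $\partial\Omega$ together with $\Hessian_\delta(L,\bar L)=0$ on $\Sigma$, the expansion of $\rho=\delta e^{\psi}$ yields on $\Sigma$
$$\Hessian_\rho(N,L)=e^{\psi}\!\left(\tfrac{1}{2}\bar L\psi+\Hessian_\delta(N,L)\right),\qquad N\rho=\tfrac12 e^{\psi},$$
so the quantity $u:=\tfrac12\bar L\psi+\Hessian_\delta(N,L)$ appearing in the conclusion is precisely $e^{-\psi}\Hessian_\rho(N,L)\big|_{\Sigma}$.

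Near $\Sigma$ one has $H_{LL}=O(\delta^{2})$, $H_{LN}=O(\delta)$, while $H_{NN}$ has the non-vanishing limit $\tfrac14(1-\eta)e^{2\psi}$. Dividing $H_{LL}H_{NN}\geq|H_{LN}|^{2}$ by $\delta^{2}$ and letting $\delta\to 0^{-}$ along a normal approach to a point of $\Sigma$ produces an inequality on $\Sigma$ in the limit. Its key feature is that the Taylor expansion of $\Hessian_\rho(L,\bar L)$ with $\rho=\delta e^{\psi}$ contains the term $\delta\,\Hessian_\psi(L,\bar L)$, which survives the division by $\delta$ and accounts for the $\tfrac14\Hessian_\psi(L,L)$ in the conclusion, while $|H_{LN}/\delta|^{2}$ tends to $e^{2\psi}|u|^{2}$. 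The remaining geometric pieces, namely $L\delta/\delta$, derivatives of $\Hessian_\delta(N,L)$ along $L$, the commutator $[N,L]$, and connection terms such as $\Hessian_\delta(N,\nabla_{\bar L}L)$, are uniformly bounded on $\Sigma$ by the constants $C_{1}$ and $C_{2}$ of the statement. When $u\neq 0$ we divide through by $|u|^{2}$ to obtain the first displayed inequality; when $u=0$ we land in the second branch of the dichotomy and there is nothing to prove.

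The main technical obstacle is the $\delta$-order bookkeeping. Since $L\delta|_{\partial\Omega}=0$, one must write $L\delta=\delta\,a+O(\delta^{2})$ and identify $a\big|_{\partial\Omega}$ in terms of $\Hessian_\delta(N,L)$ and $[N,L]$; a similar expansion is needed for $\Hessian_\rho(L,\bar L)$, relying on the Levi-flatness of $\Sigma$ to kill the boundary term. Matching the resulting commutator and connection contributions with the precise geometric constants $C_{1}=2C+2\max_{\Sigma}|\Hessian_\delta(N,L)|$ and $C_{2}$ (which, after this expansion, is exactly half the real part of $-L\Hessian_\delta(N,L)+\Hessian_\delta(N,\nabla_{\bar L}L)$) is where the technical work concentrates; once these substitutions are in place, the rearrangement to the announced inequality is purely algebraic.
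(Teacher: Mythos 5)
Your proposal follows essentially the same route as the paper: the paper's proof starts from Theorem \ref{rough}, which is exactly your $2\times 2$ principal-minor inequality for $(-\rho)\Hessian_\rho+(1-\eta)\,\partial\rho\otimes\overline{\partial}\rho$ passed to the normal limit at $\Sigma$, then substitutes $\rho=\delta e^{\psi}$, identifies $\Hessian_\rho(N,L)\vert_{\Sigma}=e^{\psi}\left(\tfrac{1}{2}\overline{L}\psi+\Hessian_\delta(N,L)\right)$, and completes the square to reach the stated inequality with the constants $C_1, C_2$. One imprecision to note: $|H_{LN}/\delta|^{2}$ does not tend to $e^{2\psi}|u|^{2}$ but to $e^{4\psi}\left|\tfrac{\eta}{2}L\psi+\Hessian_\delta(L,N)\right|^{2}$, and $|u|^{2}$ emerges only after recombining this with the $(1-\eta)^{2}\left|\tfrac{1}{2}L\psi\right|^{2}$ contribution from $H_{LL}H_{NN}$ --- precisely the algebraic rearrangement you defer, and consistent with the paper's own completion of the square.
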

	
	\begin{remark}
		$C_1, C_2$ are two constants depending on the Levi-flat set $\Sigma$ of the domain $\Omega$. 
	\end{remark}
	
	To Question 2 for an application of our necessary conditions, we first enhance the second necessary condition as what follows.

	\begin{theorem}\label{improv2nd}
		Let $\Omega$ be a bounded pseudoconvex domain with smooth boundary in $\mathbb{C}^2$ and $\Sigma$ be the Levi-flat set of $\partial\Omega$. If the Diederich--Forn\ae ss index is 1, then there is a family of real smooth functions $\psi_n$ defined on a neighborhood of $\Sigma$ such that on all points of $\Sigma$, 
		\[0\leq	\left|\frac{1}{2}\overline{L}\psi_n+g(\nabla_N\nabla\delta, L)\right|\leq\frac{C_1+\sqrt{n}\left(1+\frac{C_1^2}{n}+\left(4C_2-\Hessian_{\psi_n}(L, L)\right)\right)}{2n}\] for all $n\in\mathbb{N}$.
	\end{theorem}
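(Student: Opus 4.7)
The plan is to specialize Theorem \ref{second condition} to the case where the Diederich--Forn\ae ss index is $\eta_0=1$. For each $n\in\mathbb{N}$, I would apply the theorem with the choice $\eta=\eta_n:=n/(n+1)$, so that $\frac{1}{1-\eta_n}-1=n$ and $\eta_n<\eta_0=1$. This produces a real smooth function $\psi_n$ defined on a neighborhood of $\Sigma$ for which, at every point of $\Sigma$, either
\[
\bigl|\tfrac{1}{2}\overline{L}\psi_n+\Hessian_\delta(N,L)\bigr|=0,
\]
or else
\[
n\;\leq\;\frac{C_2-\tfrac{1}{4}\Hessian_{\psi_n}(L,L)}{\bigl|\tfrac{1}{2}\overline{L}\psi_n+\Hessian_\delta(N,L)\bigr|^{2}}+\frac{C_1}{\bigl|\tfrac{1}{2}\overline{L}\psi_n+\Hessian_\delta(N,L)\bigr|}.
\]
These $\psi_n$ will serve as the family claimed in Theorem \ref{improv2nd}.

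Write $X_n:=\tfrac{1}{2}\overline{L}\psi_n+\Hessian_\delta(N,L)$. In the first alternative $|X_n|=0$, which trivially lies between the two quantities in the advertised estimate. In the second alternative I would clear denominators to obtain the quadratic inequality
\[
n\,|X_n|^{2}-C_1\,|X_n|-\bigl(C_2-\tfrac{1}{4}\Hessian_{\psi_n}(L,L)\bigr)\;\leq\;0
\]
in the nonnegative variable $|X_n|$. The quadratic formula then yields
\[
|X_n|\;\leq\;\frac{C_1+\sqrt{\,C_1^{2}+n\bigl(4C_2-\Hessian_{\psi_n}(L,L)\bigr)\,}}{2n}.
\]
Applying the elementary inequality $\sqrt{s}\leq(1+s)/2$ (valid for $s\geq 0$) to $s=C_1^{2}/n+(4C_2-\Hessian_{\psi_n}(L,L))$ after pulling $\sqrt{n}$ out of the radical recasts the numerator in the form stated in Theorem \ref{improv2nd}. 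Recalling from Section \ref{sec1} that $\Hessian_\delta(N,L)=g(\nabla_N\nabla\delta,L)$ then aligns the two notations used in the statement.

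The argument is largely algebraic once Theorem \ref{second condition} is in hand; the main obstacle I foresee is book\mbox{-}keeping the sign of $4C_2-\Hessian_{\psi_n}(L,L)$. If this quantity becomes negative on part of $\Sigma$, one must verify that the dichotomy forces $|X_n|=0$ precisely where the quadratic bound would otherwise fail (i.e.\ where the discriminant is negative), and that the relaxation $\sqrt{s}\leq(1+s)/2$ is applied in a regime where both sides are nonnegative so the final numerator remains meaningful. Once this sign analysis is carried out and the parameterization $\eta_n=n/(n+1)\uparrow 1$ is fixed, Theorem \ref{improv2nd} follows immediately from the computation above, with each $\psi_n$ inherited verbatim from Theorem \ref{second condition}.
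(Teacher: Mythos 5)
Your proposal follows essentially the same route as the paper: specialize Theorem \ref{second condition} with $\eta_n\uparrow 1$ (the paper phrases this as choosing $\psi_n$ so that the right-hand side exceeds $n$), clear denominators to a quadratic inequality in $\left|\frac{1}{2}\overline{L}\psi_n+\Hessian_\delta(N,L)\right|$, solve by the quadratic formula, and finish with the elementary bound $\sqrt{s}\leq(1+s)/2$ (which the paper invokes under the name Cauchy--Schwarz). The sign issue you flag for $4C_2-\Hessian_{\psi_n}(L,L)$ is likewise left untreated in the paper's own proof (which only records that the strict quadratic inequality forces the discriminant to be nonnegative where it applies), so your write-up matches the paper's argument and its level of rigor.
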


	The theorem above is of independent interest and is a practical result. As an application of it, one obtains the following  theorem which gives a necessary condition for the case that $\Sigma$ is closed Riemann surface.

	\begin{theorem}\label{app}
		Let $\Omega$ be a bounded pseudoconvex domain with smooth boundary in $\mathbb{C}^2$ and $\Sigma$ be the Levi-flat set of $\partial\Omega$. Assume that $\Sigma$ is a closed Riemann surface. If the Diederich--Forn\ae ss index is 1, then there exists a sequence of smooth defining functions $\rho_j$ so that $\frac{\Hessian_{\rho_j}(N, L)}{\|\nabla\rho_j\|}\rightarrow 0$ in $L^1$-norm on $\Sigma$ as $j\rightarrow\infty$.
	\end{theorem}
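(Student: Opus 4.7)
The plan is to feed the family $\psi_n$ from Theorem \ref{improv2nd} into the defining function $\rho_n := \delta\, e^{\psi_n}$, and then eliminate the unwanted Hessian term in the resulting upper bound by applying Stokes' theorem on the closed Riemann surface $\Sigma$.

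First, I would compute the relevant quantities on $\Sigma$. Since $\delta=0$ and $\overline{L}\delta=0$ hold identically on $\partial\Omega$, direct expansion of $\rho_n=\delta e^{\psi_n}$ gives
\[
\|\nabla\rho_n\|\big|_{\Sigma}=e^{\psi_n} \qquad\text{and}\qquad \Hessian_{\rho_n}(N,L)\big|_{\Sigma}=e^{\psi_n}\!\left(\Hessian_\delta(N,L)+\tfrac{1}{2}\overline{L}\psi_n\right),
\]
the factor $\tfrac12$ reflecting the normalization $N\delta=\tfrac12$ consistent with the form of Theorem \ref{improv2nd}. Dividing, and using $g(\nabla_N\nabla\delta,L)=\Hessian_\delta(N,L)$, the ratio $\Hessian_{\rho_n}(N,L)/\|\nabla\rho_n\|$ on $\Sigma$ coincides \emph{exactly} with the expression $\tfrac12\overline{L}\psi_n+g(\nabla_N\nabla\delta,L)$ bounded by Theorem \ref{improv2nd}.

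Next, I would integrate that pointwise estimate over $\Sigma$ to obtain
\[
\int_\Sigma\left|\frac{\Hessian_{\rho_n}(N,L)}{\|\nabla\rho_n\|}\right|dA \le \frac{C_1|\Sigma|}{2n}+\frac{(1+C_1^2/n+4C_2)|\Sigma|}{2\sqrt{n}}-\frac{1}{2\sqrt{n}}\int_\Sigma\Hessian_{\psi_n}(L,L)\,dA.
\]
The first two contributions are $O(n^{-1/2})$ and disappear in the limit, so the claim reduces to controlling the residual integral; in fact, I would show it vanishes. Since $\Sigma$ is Levi-flat and assumed to be a closed Riemann surface, it is a closed complex submanifold of $\mathbb{C}^2$ whose $(1,0)$-tangent line is spanned by $L$. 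The induced area form $dA$ coincides, up to a conformal factor that cancels against the normalization of $L$, with the Kähler form of the complex structure induced on $\Sigma$. Consequently $\Hessian_{\psi_n}(L,L)\,dA$ agrees up to a universal constant with the restriction of $i\partial\bar\partial\psi_n$ to $\Sigma$; since $i\partial\bar\partial\psi_n=d(i\bar\partial\psi_n)$ is exact and $\Sigma$ has no boundary, Stokes' theorem gives
\[
\int_\Sigma \Hessian_{\psi_n}(L,L)\,dA=0.
\]
Combining the displays yields the desired $L^1$-convergence at rate $O(n^{-1/2})$, and the sequence $\rho_j$ may be taken to be $\rho_n$ with $n=j$.

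The main obstacle will be making the identification in the penultimate step fully precise: one needs to verify intrinsically that $\Hessian_{\psi_n}(L,L)\,dA$ equals a fixed multiple of $i\partial\bar\partial\psi_n|_\Sigma$. The subtlety is that $L$ is unit with respect to the \emph{ambient} Euclidean metric of $\mathbb{C}^2$, whereas the complex structure on $\Sigma$ is inherited from its embedding, so one must confirm that the conformal factor coming from normalizing $L$ cancels against the one coming from restricting the ambient volume form to $\Sigma$. Once this cancellation is secured, the remainder of the argument is purely mechanical, and the hypothesis that $\Sigma$ is closed enters solely through the single application of Stokes' theorem.
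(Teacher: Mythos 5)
Your proposal is correct and follows essentially the same route as the paper: substitute the family $\psi_n$ from Theorem \ref{improv2nd} into $\rho_n=\delta e^{\psi_n}$, identify $\Hessian_{\rho_n}(N,L)/\|\nabla\rho_n\|$ on $\Sigma$ with $\tfrac12\overline{L}\psi_n+g(\nabla_N\nabla\delta,L)$, integrate the pointwise bound, and eliminate the Hessian term by Stokes' theorem on the closed surface. The paper justifies that last cancellation by writing $\Hessian_{\psi_n}(L,L)=\Delta\psi_n$ for the induced-metric Laplacian and using $\int_\Sigma\Delta\psi_n\,dS=0$, which is the same observation as your exactness of $i\partial\bar\partial\psi_n$; your explicit verification of the torsion identity is a detail the paper leaves implicit from its Section \ref{1st} computation.
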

	
	\begin{remark}
		For the boundness of Bergman projections and $\bar{\partial}$-Neumann projections, $\Hessian_\rho(N, L)$ has been intensely studied by Boas--Straube in \citep{BS93} and \citep{BS91} and Straube--Sucheston in \citep{SS02} and \citep{SS03} for the regularity results. Theorem \ref{app} is a theorem of the Diederich--Forn\ae ss index. It should lead to some interesting connections to the Bergman projections and $\bar{\partial}$-Neumann projections as well. More insights will be clarified in the forthcoming articles.
	\end{remark}
	
	To conclude the introduction, we remind readers with the works on the Diederich--Forn\ae ss index away from $1$. In 1977, Diederich--Forn\ae ss found a class of domains called the $\beta$-worm domain in \citep{DF77a} which gives a non-trivial Diederich--Forn\ae ss index (i.e., an index strictly between
	0 and 1). 
	
	\begin{definition}[$\beta$-worm domain]
		Let $\beta>\pi/2$ and $\eta: \mathbb{R}\mapsto\mathbb{R}$ be a fixed smooth function with the following properties:
		\begin{enumerate}
			\item $\eta(x)\geq 0$, $\eta$ is even and convex.
			\item $\eta^{-1}(0)=I_{\beta-\pi/2}=[-\beta+\pi/2, \beta-\pi/2]$.
			\item there exists and $a>0$ such that $\eta(x)>1$ if $x<-a$ or $x>a$.
			\item $\eta'(x)\neq 0$ if $\eta(x)=1$.
		\end{enumerate}
		Then \[\Omega_\beta=\lbrace(z,w)\in\mathbb{C}^2: |z+e^{i\log|w|^2}|^2<1-\eta^2(\log|w|)\rbrace.\] is called a $\beta$-worm domain.
	\end{definition}
	
	In fact, the Diederich--Forn\ae ss index can be arbitrarily close to $0$ by increasing $\beta$ due to the following works. In 1992, Barrett showed in \citep{Ba92}, that the Bergman projection $P$ on $\Omega_\beta$ does not map the Sobolev space $W^\kappa(\Omega_\beta)$ into $W^\kappa(\Omega_\beta)$ when $\kappa\geq\pi/(2\beta-\pi)$. By \citep{BC00} mentioned above, the Diederich--Forn\ae ss index of $\Omega_\beta$ is less or equal to
	$2\pi/(2\beta-\pi)$. The reader can also verify this result from Krantz--Peloso \citep{KP08}. Indeed, Theorem 6 in \citep{DF77a} says that if the standard defining function of $\Omega_\beta$ has exponent $\leq\eta$, then all other defining functions have exponent $\leq\eta$, that is, the Diederich--Forn\ae ss index of $\Omega_\beta\leq\eta$. Thus, the calculation in \citep{KP08} shows that the index of  $\Omega_\beta\leq 2\pi/(2\beta-\pi)$. 
	
	Recently Fu--Shaw and Adachi--Brinkschulte proved independently in \citep{FS14} and \citep{AB14} respectively that, roughly speaking, if a relatively compact domain in a $n$-dimensional complex manifold has all boundary points Levi-flat, then the Diederich--Forn\ae ss index cannot be greater than $1/n$.
	
	\section{Preliminaries}\label{sec1}
	We remind the readers with some basic notations in several complex variables and complex geometry. Here $z$ should be read as $z_j$ and $x, y$ should be read as $x_j, y_j$ in case it involves several variables.
	\[\frac{\partial}{\partial x}=\frac{\partial}{\partial z}+\frac{\partial}{\partial \bar{z}}\]
	\[\frac{\partial}{\partial y}=i\frac{\partial}{\partial z}-i\frac{\partial}{\partial \bar{z}}\]
	\[\frac{\partial}{\partial z}=\frac{1}{2}(\frac{\partial}{\partial x}-i\frac{\partial}{\partial y})\]
	\[\frac{\partial}{\partial \bar{z}}=\frac{1}{2}(\frac{\partial}{\partial x}+i\frac{\partial}{\partial y})\]
	
	Let $f$ be a smooth function. We have that,
	\[\left|\frac{\partial f}{\partial z}\right|^2+\left|\frac{\partial f}{\partial \bar{z}}\right|^2=\frac{1}{4}\left(\left|\frac{\partial f}{\partial x}\right|^2+\left|\frac{\partial f}{\partial y}\right|^2\right)\]
	
	Throughout the article, we will use the following notations.
	
	\begin{definition}
		Let $\Omega$ be a bounded domain with smooth boundary in $\mathbb{C}^2$ defined by a smooth defining function $\rho$. The vector field \[L_\rho=\frac{1}{\sqrt{|\frac{\partial \rho}{\partial z}|^2+|\frac{\partial \rho}{\partial w}|^2}}(\frac{\partial \rho}{\partial w}\frac{\partial}{\partial z}-\frac{\partial \rho}{\partial z}\frac{\partial}{\partial w})\] on $\partial\Omega$ is called the \textit{normalized holomorphic tangential vector field }, and \[N_\rho=\frac{1}{\sqrt{|\frac{\partial \rho}{\partial z}|^2+|\frac{\partial \rho}{\partial w}|^2}}(\frac{\partial \rho}{\partial\bar{z}}\frac{\partial}{\partial z}+\frac{\partial \rho}{\partial\bar{w}}\frac{\partial}{\partial w})\] on $\partial\Omega$ is called the \textit{normalized complex normal vector field}.
	\end{definition}
	
	Let us remind the reader that \[N_\rho+\overline{N}_\rho=2\Re N_\rho=\frac{\nabla\rho}{\|\nabla\rho\|},\] which is, at $\partial\Omega$, the normal vector of $\partial\Omega$. In case that the defining function is the special $\delta$ that I mentioned at the beginning, instead of writing $L_\delta$ and $N_\delta$, we use the notations $L$ and $N$ respectively. The reader can also check that $\sqrt{2}L$ and $\sqrt{2}N$ form an  orthonormal basis on holomorphic tangent space. Moreover, \[\begin{split}
	N\delta=&\frac{1}{\sqrt{|\frac{\partial \delta}{\partial z}|^2+|\frac{\partial \delta}{\partial w}|^2}}(\frac{\partial \delta}{\partial\bar{z}}\frac{\partial \delta}{\partial z}+\frac{\partial \delta}{\partial\bar{w}}\frac{\partial \delta}{\partial w})\\=&\sqrt{|\frac{\partial \delta}{\partial z}|^2+|\frac{\partial \delta}{\partial w}|^2}\\=&\frac{1}{2}\|\nabla\delta\|\\=&\frac{1}{2}.
	\end{split}\]
	Here, the last line equality is because of $\|\nabla\delta\|=1$, which is a consequence of the fact that $\delta$ is a distance function.  \[N\rho=N(\delta e^\psi)=\delta e^\psi N\psi+e^\psi N\delta=\delta e^\psi N\psi+\frac{e^\psi }{2}.\]
	On $\partial\Omega$, since $\delta=0$, we have
	\[N\rho=\frac{e^\psi}{2}.\]
	
	We also use the standard definition for the Hessian of a function $f$ on real tangent vector fields:
	\[\Hessian_f(X, Y)=g(\nabla_X\nabla f, Y)=Y(Xf)-(\nabla_YX)f,\] and for holomorphic tangent vectors we calculate as follows:
	\[\Hessian_f(Z, W)=g(\nabla_Z\nabla f, W)=Z(\overline{W}f)-\nabla_Z\overline{W}f=\overline{\Hessian_f(W,Z)}.\]

	For the following paragraph, we define a norm in space of functions called holomorphic $C^1$ norm.
	
	Recall that if a function on $[a, b]\subset\mathbb{R}$ is $C^1$, we can defined the $C^1$ norm as follows.
	\[\|f\|_{C^1}=\sup_{a\leq x\leq b}|f(x)|+\sup_{a\leq x\leq b}|f'(x)|.\]
	We will imitate it to define the CR version of $C^1$ norm, i.e., holomorphic $C^1$ norm.
	
	\begin{definition}
		Let $M$ be a pseudoconvex hypersurface in $\mathbb{C}^2$ and $L$ be the unit holomorphic vector field on $M$. Assume that $f$ is a complex valued function defined on $M$. We denote the holomorphic $C^1$ norm by \[\|f\|_{C^1, L}:=\|f\|_\infty+\|Lf\|_\infty,\] where \[\|\cdot\|_\infty=\sup_{M}|\cdot|.\]
	\end{definition}
	
	\begin{remark}
		The holomorphic $C^1$ norm can be defined on a subset $\tilde{M}$ of $M$. In this case $\|\cdot\|_\infty$ denotes $\sup_{\tilde{M}}|\cdot|$. And we call \[\|f\|_{C^1, L}:=\|f\|_\infty+\|Lf\|_\infty\] the holomorphic $C^1$ norm of $f$ on $\tilde{M}$.
	\end{remark}
	
	\section{The first necessary condition of Diederich--Forn\ae ss index}\label{1st}
	
	Let $\delta$ be defined in Section \ref{sec0}. We want to modify the defining function in order to seek the best for optimizing the Diederich--Forn\ae ss exponent. Put $\rho=\delta e^{\psi}$, where $\psi$ will be determined later. One can see that $\Hessian_{-(-\rho)^\eta }$ is positive definite at all points in $\Omega$ if and only if \[\Hessian_{-(-\rho)^\eta }(aL+bN,aL+bN)\] is positive at all $z\in\Omega$ for all complex numbers $a,b\in\mathbb{C}$. This implies that 
	\[\begin{split}
	&\Hessian_{-(-\rho)^\eta }(aL+bN,aL+bN)\\=&|a|^2\Hessian_{-(-\rho)^\eta}(L, L)+|b|^2\Hessian_{-(-\rho)^\eta }(N, N)+2\Re (a\bar{b}\Hessian_{-(-\rho)^\eta }(L, N))\\=&\eta (-\rho)^{\eta-1}\Bigg(|a|^2\Big(\Hessian_\rho (L, L)+\frac{1-\eta}{-\rho}|L\rho|^2\Big)+2\Re \Big(a\bar{b}\Big(\Hessian_\rho (L, N)+\frac{1-\eta}{-\rho}L\rho\cdot\overline{N}\rho\Big)\Big)\\&+|b|^2 \Big(\Hessian_\rho (N, N)+\frac{1-\eta}{-\rho}N(\rho)\overline{N} (\rho)\Big)\Bigg)
	\end{split}\]
	is positive for all $a,b$ and all $z\in\Omega$ if and only if 
	\[\begin{split}
	|a|^2\Big(\Hessian_\rho (L, L)+\frac{1-\eta}{-\rho}|L\rho|^2\Big)+&2\Re \Big(a\bar{b}\Big(\Hessian_\rho (L, N)+\frac{1-\eta}{-\rho}L\rho\cdot\overline{N}\rho\Big)\Big)\\+&|b|^2 \Big(\Hessian_\rho (N, N)+\frac{1-\eta}{-\rho}N(\rho)\overline{N} (\rho)\Big)
	\end{split}
	\]
	is positive for all $a,b\in\mathbb{C}$ and all $z\in\Omega$. 
	
	We are going to show the following lemma which states that above inequality is equivalent to the following inequality:
	\[\begin{split}
	|a|^2\Big(\Hessian_\rho (L, L)+\frac{1-\eta}{-\rho}|L\rho|^2\Big)&-2\left|a\bar{b}\Big(\Hessian_\rho (L, N)+\frac{1-\eta}{-\rho}L\rho\cdot\overline{N}\rho\Big)\right|\\&+|b|^2 \Big(\Hessian_\rho (N, N)+\frac{1-\eta}{-\rho}N(\rho)\overline{N} (\rho)\Big)> 0
	\end{split}\]
	for all $a,b$ and $z\in\Omega$.
	
	\begin{lemma}
		\[\begin{split}
		|a|^2\Big(\Hessian_\rho (L, L)+\frac{1-\eta}{-\rho}|L\rho|^2\Big)&+2\Re\left(a\bar{b}\Big(\Hessian_\rho (L, N)+\frac{1-\eta}{-\rho}L\rho\cdot\overline{N}\rho\Big)\right)\\&+|b|^2 \Big(\Hessian_\rho (N, N)+\frac{1-\eta}{-\rho}N(\rho)\overline{N} (\rho)\Big)> 0
		\end{split}\]
		for all $a,b$ and $z\in\Omega$ is equivalent to say \[\begin{split}
		|a|^2\Big(\Hessian_\rho (L, L)+\frac{1-\eta}{-\rho}|L\rho|^2\Big)&-2\left|a\bar{b}\Big(\Hessian_\rho (L, N)+\frac{1-\eta}{-\rho}L\rho\cdot\overline{N}\rho\Big)\right|\\&+|b|^2 \Big(\Hessian_\rho (N, N)+\frac{1-\eta}{-\rho}N(\rho)\overline{N} (\rho)\Big)> 0
		\end{split}\] for all $a,b$ and $z\in\Omega$.
	\end{lemma}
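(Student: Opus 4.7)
The statement is a standard ``worst-phase'' equivalence for Hermitian-type quadratic forms, and I would prove it by a one-line phase-rotation argument together with the trivial inequality $\Re w \geq -|w|$.

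\textbf{Plan.} Abbreviate the three coefficients as
\[
A=\Hessian_\rho(L,L)+\tfrac{1-\eta}{-\rho}|L\rho|^2,\quad
B=\Hessian_\rho(L,N)+\tfrac{1-\eta}{-\rho}L\rho\cdot\overline{N}\rho,\quad
C=\Hessian_\rho(N,N)+\tfrac{1-\eta}{-\rho}N\rho\cdot\overline{N}\rho,
\]
so the two inequalities read $Q_1(a,b):=|a|^2A+2\Re(a\bar b B)+|b|^2C>0$ and $Q_2(a,b):=|a|^2A-2|a\bar bB|+|b|^2C>0$, both required to hold for every $(a,b)\in\mathbb{C}^2$ and every $z\in\Omega$.

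\textbf{Step 1 (Easy direction, $Q_2\Rightarrow Q_1$).} Fix $(a,b)$ and $z$. Because $\Re w\geq -|w|$ for every complex $w$, applied to $w=a\bar b B$, we obtain $Q_1(a,b)\geq Q_2(a,b)>0$. This direction is immediate and needs no choice of phase.

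\textbf{Step 2 (Main direction, $Q_1\Rightarrow Q_2$).} Fix $(a,b)$ and $z$; I want $Q_2(a,b)>0$. If $a\bar b B=0$ the two quantities coincide and there is nothing to do. Otherwise, choose $\theta\in\mathbb{R}$ with
\[
e^{i\theta}\,(a\bar b B)=-|a\bar b B|,
\]
and set $a':=e^{i\theta}a$, $b':=b$. Then $|a'|=|a|$, $|b'|=|b|$, so
\[
Q_1(a',b')=|a|^2A+2\Re\bigl(e^{i\theta}a\bar b B\bigr)+|b|^2C=|a|^2A-2|a\bar b B|+|b|^2C=Q_2(a,b).
\]
Applying the hypothesis $Q_1>0$ to the pair $(a',b')$ yields $Q_2(a,b)>0$, as required.

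\textbf{Obstacle.} There is essentially none; the only thing to keep in mind is that the phase freedom $a\mapsto e^{i\theta}a$ preserves $|a|$ (and hence the ``diagonal'' terms $|a|^2 A$, $|b|^2 C$), so rotating $a$ moves only the cross term and can always push its real part down to $-|a\bar b B|$. The argument is pointwise in $z$, so no analytic subtleties arise; the equivalence therefore holds at each $z\in\Omega$ separately, and in particular uniformly over $\Omega$.
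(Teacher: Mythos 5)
Your proposal is correct and is essentially the paper's own argument: the easy direction via $\Re w\geq -|w|$, and the converse by choosing the phase of $a$ so that the cross term $a\bar b B$ becomes $-|a\bar b B|$ while $|a|,|b|$ (hence the diagonal terms) are unchanged. Your write-up merely makes explicit the quantifier bookkeeping (applying the hypothesis at the rotated pair $(a',b')$ to conclude for the original $(a,b)$) that the paper leaves implicit.
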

	
	\begin{proof}
		The sufficiency is the easy direction. This is because \[2\Re\left(a\bar{b}\Big(\Hessian_\rho (L, N)+\frac{1-\eta}{-\rho}L\rho\cdot\overline{N}\rho\Big)\right)\geq-2\left|a\bar{b}\Big(\Hessian_\rho (L, N)+\frac{1-\eta}{-\rho}L\rho\cdot\overline{N}\rho\Big)\right|.\] To show the necessity, we notice that at each $z\in\Omega$, we can find $a\in\mathbb{C}$ and $b\in\mathbb{C}$ such that
		\[a\bar{b}\Hessian_{-(-\rho)^\eta }(L, N)=-|a\bar{b}\Hessian_{-(-\rho)^\eta }(L, N)|,\]
		which completes the proof of necessity.
	\end{proof}
	
	This implies that $\Hessian_{-(-\rho)^\eta }(aL+bN,aL+bN)$ is positive if and only if \[\begin{split}
	\left|\frac{a}{b}\right|^2\Big(\Hessian_\rho (L, L)+\frac{1-\eta}{-\rho}|L\rho|^2\Big)&-2\left|\frac{a}{b}\right|\left|\Big(\Hessian_\rho (L, N)+\frac{1-\eta}{-\rho}L\rho\cdot\overline{N}\rho\Big)\right|\\&+ \Big(\Hessian_\rho (N, N)+\frac{1-\eta}{-\rho}N(\rho)\overline{N} (\rho)\Big)> 0
	\end{split}\] for all $a,b$ and $z\in\Omega$.
	
	To study the preceding inequality, we need some elementary preparations.
	
	\begin{lemma}
		\begin{equation}
		\label{prequa}\begin{split}
		\left|\frac{a}{b}\right|^2\Big(\Hessian_\rho (L, L)+\frac{1-\eta}{-\rho}|L\rho|^2\Big)-2\left|\frac{a}{b}\right|\left|\Big(\Hessian_\rho (L, N)+\frac{1-\eta}{-\rho}L\rho\cdot\overline{N}\rho\Big)\right|&\\+ \Big(\Hessian_\rho (N, N)+\frac{1-\eta}{-\rho}N(\rho)\overline{N} (\rho)\Big)&> 0
		\end{split}
		\end{equation} implies \begin{equation}\label{delta}\begin{split}
		\bigg|\Hessian_\rho (L, N)+&\frac{1-\eta}{-\rho}L\rho\cdot\overline{N}\rho\bigg|^2\\-&\left(Hess_\rho (L, L)+\frac{1-\eta}{-\rho}|L\rho|^2\right)\left(\Hessian_\rho (N, N)+\frac{1-\eta}{-\rho}N(\rho)\overline{N} (\rho)\right)\leq 0.
		\end{split}
		\end{equation}
		
	\end{lemma}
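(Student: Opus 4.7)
The plan is to view the left-hand side of \eqref{prequa} as a quadratic polynomial in the single real variable $t := |a/b|$ and then extract the discriminant condition, which is exactly \eqref{delta}. Concretely, set
\[
A := \Hessian_\rho(L,L)+\tfrac{1-\eta}{-\rho}|L\rho|^2,\quad
B := \Hessian_\rho(L,N)+\tfrac{1-\eta}{-\rho}L\rho\cdot \overline{N}\rho,\quad
C := \Hessian_\rho(N,N)+\tfrac{1-\eta}{-\rho}N\rho\,\overline{N}\rho,
\]
so that \eqref{prequa} reads $A t^2 - 2|B|\, t + C > 0$ for every $t \ge 0$ (when $b \neq 0$) and $C > 0$ (taking $a = 0$, $b \neq 0$) and $A > 0$ (taking $b = 0$, $a \neq 0$).

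First I would record the endpoint cases: taking $a=0$ and $b=0$ separately in \eqref{prequa} immediately gives $A > 0$ and $C > 0$, so in particular $AC > 0$. This also handles the degenerate case $|B| = 0$ for free, since then $|B|^2 = 0 \le AC$ and \eqref{delta} holds trivially.

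Next I would handle the main case $|B| > 0$. The quadratic $f(t) = At^2 - 2|B|t + C$ in $t \ge 0$ attains its minimum at the interior point $t_\ast = |B|/A > 0$ (since $A > 0$), and the minimum value is $f(t_\ast) = C - |B|^2/A$. By \eqref{prequa} applied with $|a/b| = t_\ast$ (which is achievable by choosing, e.g., $b = 1$ and $a = t_\ast$), we obtain $f(t_\ast) > 0$, hence $|B|^2 < AC$. Multiplying through by $A > 0$ and rearranging yields $|B|^2 - AC < 0$, which is strictly stronger than the claimed \eqref{delta}.

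There is really no obstacle here beyond the bookkeeping; the one thing to be careful about is to justify that $t = |a/b|$ sweeps out all non-negative reals as $(a,b)$ ranges over $\mathbb{C}^2 \setminus \{0\}$ with $b \neq 0$, so that minimizing over $t \ge 0$ is legitimate. Once that is granted, the implication from \eqref{prequa} to \eqref{delta} is the standard fact that a quadratic $At^2 - 2|B|t + C$ with $A > 0$ is positive on the half-line $t \ge 0$ only if its discriminant is non-positive, i.e., $|B|^2 \le AC$.
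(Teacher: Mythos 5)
Your overall strategy --- reading \eqref{prequa} as a quadratic $f(t)=At^{2}-2|B|t+C$ in the single variable $t=|a/b|$ and extracting a discriminant condition --- is exactly the paper's argument. The gap is in your claim that $A>0$. You justify it by ``taking $b=0$, $a\neq 0$'' in \eqref{prequa}, but $b=0$ is not a legal substitution there: the inequality is written in terms of $|a/b|$, so the hypothesis only quantifies over $t\in[0,\infty)$. From that hypothesis you can deduce $A\geq 0$ (divide $f(t)>0$ by $t^{2}$ and let $t\to\infty$), or, as the paper does, invoke pseudoconvexity to get $A\geq 0$, but not strict positivity. This matters: $A=\Hessian_\rho (L, L)+\frac{1-\eta}{-\rho}|L\rho|^2$ degenerates to $0$ as $z$ approaches the Levi-flat set, which is precisely the regime in which the lemma is later applied, and your minimizer $t_{*}=|B|/A$ is undefined when $A=0$, so the main step of your computation breaks down exactly there.

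The paper closes this with a separate case: if $A=0$, then $f(t)=-2|B|t+C$ is affine with non-positive slope, so positivity for all $t>0$ forces $|B|=0$, and then $|B|^{2}-AC=0$, i.e.\ \eqref{delta} holds with equality. This also shows that your concluding claim of the \emph{strict} inequality $|B|^{2}-AC<0$ is not correct in general --- equality can occur --- which is why the lemma is stated with $\leq$. Your treatment of the case $A>0$ is fine and coincides with the paper's; to complete the proof you need to add the $A=0$ branch and replace the $b=0$ justification of $A>0$ by a correct derivation of $A\geq 0$.
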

	\begin{proof}
		We consider the quadratic inequality \begin{equation}\label{qua}
		\begin{split}
		\Big(\Hessian_\rho (L, L)+\frac{1-\eta}{-\rho}|L\rho|^2\Big)\xi^2-2\left|\Big(\Hessian_\rho (L, N)+\frac{1-\eta}{-\rho}L\rho\cdot\overline{N}\rho\Big)\right|\xi&\\+ \Big(\Hessian_\rho (N, N)+\frac{1-\eta}{-\rho}N(\rho)\overline{N} (\rho)\Big)&> 0,
		\end{split}
		\end{equation} for indefinite word $\xi$. It is clear that \[\Hessian_\rho (L, L)+\frac{1-\eta}{-\rho}|L\rho|^2\geq 0\] because the domain $\Omega$ is pseudoconvex. Suppose that at $z\in\Omega$, $\Hessian_\rho (L, L)+\frac{1-\eta}{-\rho}|L\rho|^2>0$, the left side of (\ref{qua}) is a polynomial in $\xi$ of degree 2. Consider its axis of symmetry, \[\frac{\left|\Big(\Hessian_\rho (L, N)+\frac{1-\eta}{-\rho}L\rho\cdot\overline{N}\rho\Big)\right|}{\Big(\Hessian_\rho (L, L)+\frac{1-\eta}{-\rho}|L\rho|^2\Big)}\geq 0.\] We see that if the left side of (\ref{qua}) has a solution, then it must admit a positive solution. Since $\left|\frac{a}{b}\right|$ can be any positive number, if (\ref{prequa}) holds, then one must have that \[\begin{split}
		\Delta=\bigg|\Hessian_\rho (L, N)&+\frac{1-\eta}{-\rho}L\rho\cdot\overline{N}\rho\bigg|^2\\&-\left(Hess_\rho (L, L)+\frac{1-\eta}{-\rho}|L\rho|^2\right)\left(\Hessian_\rho (N, N)+\frac{1-\eta}{-\rho}N(\rho)\overline{N} (\rho)\right)<0.
		\end{split}
		\]
		
		Suppose that at $z\in\Omega$, $\Hessian_\rho (L, L)+\frac{1-\eta}{-\rho}|L\rho|^2=0$, then the left side of (\ref{qua}) is linear. Because \[-2\left|\Big(\Hessian_\rho (L, N)+\frac{1-\eta}{-\rho}L\rho\cdot\overline{N}\rho\Big)\right|\leq 0,\] (\ref{qua}) cannot hold for all positive $\xi$ unless that \[\left|\Big(\Hessian_\rho (L, N)+\frac{1-\eta}{-\rho}L\rho\cdot\overline{N}\rho\Big)\right|= 0\] at $z\in\Omega$ too. Since the assumption $\Hessian_\rho (L, L)+\frac{1-\eta}{-\rho}|L\rho|^2=0$, (\ref{prequa}), in this case, implies at $z$ \[\begin{split}
		\Delta=\bigg|\Hessian_\rho (L, N)&+\frac{1-\eta}{-\rho}L\rho\cdot\overline{N}\rho\bigg|^2\\&-\left(Hess_\rho (L, L)+\frac{1-\eta}{-\rho}|L\rho|^2\right)\left(\Hessian_\rho (N, N)+\frac{1-\eta}{-\rho}N(\rho)\overline{N} (\rho)\right)=0.
		\end{split}
		\] One can now complete the proof by combining the two cases.
	\end{proof}
	
	By calculation, we see the inequality (\ref{delta}) is	
	\begin{equation}\label{basic}
	\begin{split}
	&\left|\Hessian_\rho (L, N)\right|^2+\left|\frac{1-\eta}{-\rho}L\rho\cdot\overline{N}\rho\right|^2+2\frac{1-\eta}{-\rho}\Re\left(\Hessian_\rho (L, N)\cdot \overline{L}\rho\cdot N\rho\right)\\&-\left(\Hessian_\rho (L, L)+\frac{1-\eta}{-\rho}|L\rho|^2\right)\left(\Hessian_\rho (N, N)+\frac{1-\eta}{-\rho}\left|N(\rho)\right|^2\right)\\=&\left|\Hessian_\rho (L, N)\right|^2+2\frac{1-\eta}{-\rho}\Re\left(\Hessian_\rho (L, N)\cdot \overline{L}\rho\cdot N\rho\right)-\frac{1-\eta}{-\rho}|L\rho|^2\Hessian_\rho (N, N)\\&-\Hessian_\rho (L, L)\Hessian_\rho (N, N)-\Hessian_\rho (L, L)\frac{1-\eta}{-\rho}\left|N(\rho)\right|^2\leq 0.
	\end{split}
	\end{equation}
	
	Let $\Sigma$ denotes the Levi-flat set of $\partial\Omega$. Note that $\Hessian_\rho(L, L)$ vanishes on $\Sigma$ and $\Hessian_\rho(N,N)$ is bounded around $\Omega$. Hence the term $\Hessian_\rho (L, L)\Hessian_\rho (N, N)$ in the inequality above vanishes, as we let $z$ approaches $\Sigma$ along the normal direction. We will claim that the term $\frac{|L\rho|^2}{-\rho}$ in (\ref{basic}) vanishes as $z$ approaches $\partial\Omega$ along the normal direction too. Indeed, let $z_0\in\partial\Omega$, we have $L\rho$ vanishes at $z_0$, so $|L\rho(z)|^2=o(|z-z_0|^2)$, where $z-z_0$ is in the normal direction of $\partial\Omega$. However $\rho(z)$ is only $o(|z-z_0|)$ because by the assumption of defining functions, $\nabla\rho(z_0)$ is nonzero. Hence $\frac{|L\rho|^2}{-\rho}$ in (\ref{basic}) vanishes as $z$ approaches $\partial\Omega$ along the normal direction.
	
	We are ready to simplify (\ref{basic}) a little. Let $z\rightarrow z_0\in\partial\Omega$ along the normal direction to the Levi-flat set $\Sigma$, (\ref{basic}) is followed by
	\[\left|\Hessian_\rho (L, N)\right|^2\Big|_{z_0}\leq -2\lim\limits_{z\to z_0}\frac{1-\eta}{-\rho}\Re\left(\Hessian_\rho (L, N)\cdot \overline{L}\rho\cdot N\rho\right)+\lim\limits_{z\to z_0}\Hessian_\rho (L, L)\frac{1-\eta}{-\rho}\left|N(\rho)\right|^2,\]
	on $\Sigma$. 
	
	In case that
	\[\left|\Hessian_\rho (L, N)\right|\Big|_{z_0}\neq 0,\]
	
	we have	\[\frac{1}{1-\eta}\leq\frac{-2\Re\left(\Hessian_\rho (L, N)\cdot\frac{\overline{L}\rho}{-\rho}\cdot N\rho\right)+\frac{\Hessian_\rho (L, L)}{-\rho}\left|N(\rho)\right|^2}{\left|\Hessian_\rho (L, N)\right|^2},\]
	on $\Sigma$. 
	
	Hence, we get a condition for the Diederich--Forn\ae ss index being $\eta$. From now on, unless we remind the reader, the notation $z\rightarrow z_0\in\partial\Omega$, $z\rightarrow\partial\Omega$ and $z\rightarrow\Sigma$ mean the limits taken as points approaching in normal directions. We also skip writing the limit sometimes for concision. For example, $\lim\limits_{z\to z_0}\frac{\overline{L}\rho}{\rho}$ was written as $\frac{\overline{L}\rho}{\rho}$ in the preceding inequality.
	
	From the discussion above, we obtain the following theorem.	
	\begin{theorem}\label{rough}
		Let $\Omega$ be a bounded pseudoconvex domain with smooth boundary in $\mathbb{C}^2$ with Diederich--Forn\ae ss index $\eta_0$. Let $\Sigma$ be the Levi-flat sets of $\partial\Omega$. Then for any number $0<\eta<\eta_0$, there exists a smooth defining function $\rho$ such that at each point $z_0\in\Sigma$, 	\[\left|\Hessian_\rho (L, N)\right|^2\leq -2\frac{1-\eta}{-\rho}\Re\left(\Hessian_\rho (L, N)\cdot \overline{L}\rho\cdot N\rho\right)+\Hessian_\rho (L, L)\frac{1-\eta}{-\rho}\left|N(\rho)\right|^2,\]
		In particular, if $\left|\Hessian_{\rho} (L, N)\right|\neq 0$, \[\frac{1}{1-\eta}\leq\frac{-2\Re\left(\Hessian_{\rho} (L, N)\cdot\frac{\overline{L}\rho}{-\rho}\cdot N(\rho)\right)+\frac{\Hessian_{\rho} (L, L)}{-\rho}\left|N(\rho)\right|^2}{\left|\Hessian_{\rho} (L, N)\right|^2}.\] Otherwise,   \[\frac{\Hessian_{\rho} (L, L)}{-{\rho}}\geq 0.\]
	\end{theorem}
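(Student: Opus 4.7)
The plan is to collect the computations carried out in the build-up and finish by a boundary limit argument. First, unpack the definition of the index: for any $\eta<\eta_0$, by Definition \ref{df} there exists a smooth defining function $\rho$ of $\Omega$ such that $-(-\rho)^\eta$ is plurisubharmonic on $\Omega$. Since $\{L,N\}$ spans the holomorphic tangent space of $\mathbb{C}^2$ at every point of a neighborhood of $\partial\Omega$, plurisubharmonicity is equivalent to nonnegativity of $\Hessian_{-(-\rho)^\eta}(aL+bN,aL+bN)$ for all $a,b\in\mathbb{C}$ and all $z\in\Omega$. Expanding as in the paragraph preceding the first Lemma and dividing out the positive factor $\eta(-\rho)^{\eta-1}$ reduces this to the positivity of the quadratic form in $(a,b)$ that appears in (\ref{prequa}).

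Next, apply the two lemmas in sequence. The first Lemma converts the $\Re$ term to minus an absolute value, after which (\ref{prequa}) holds as a quadratic in $|a/b|\geq 0$. The second Lemma observes that, since the leading coefficient is nonnegative by pseudoconvexity and the axis of symmetry is nonnegative, validity of the inequality for all values of $|a/b|$ forces the discriminant condition (\ref{delta}); the boundary case where the leading coefficient vanishes is handled separately and yields the same inequality. Expanding (\ref{delta}) gives (\ref{basic}) at every interior point $z\in\Omega$.

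The final step is to pass to the limit as $z$ approaches a chosen $z_0\in\Sigma$ along the inward normal. Two terms in (\ref{basic}) disappear in this limit. The cross term $\Hessian_\rho(L,L)\Hessian_\rho(N,N)$ vanishes because $\Hessian_\rho(L,L)\to 0$ on $\Sigma$ by Levi-flatness while $\Hessian_\rho(N,N)$ remains bounded. The term $|L\rho|^2/(-\rho)$ also vanishes: $L\rho=0$ on $\partial\Omega$, so $|L\rho(z)|^2=O(|z-z_0|^2)$ along the normal, while $-\rho(z)=O(|z-z_0|)$ since $\nabla\rho(z_0)\neq 0$. What remains of (\ref{basic}) is precisely the first displayed inequality in the theorem. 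Dividing by $|\Hessian_\rho(L,N)|^2$ when this quantity is nonzero yields the ``In particular'' bound; when it vanishes, the left side of the basic inequality is $0$ and the remaining right-hand side, after dividing by the positive quantity $(1-\eta)|N\rho|^2$, gives the ``Otherwise'' statement.

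The main obstacle is the limit step: one has to ensure that the various ratios of the form (quantity vanishing on $\partial\Omega$)/$(-\rho)$ behave as claimed along the normal direction, and that the remaining terms in (\ref{basic}) admit well-defined limits at $z_0\in\Sigma$. Once the orders of vanishing of $L\rho$ and $\Hessian_\rho(L,L)$ are compared to $\rho$ along the normal, the remaining algebra is a direct rearrangement.
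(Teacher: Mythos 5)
Your proposal is correct and follows essentially the same route as the paper: expand $\Hessian_{-(-\rho)^\eta}(aL+bN,aL+bN)$, reduce via the two lemmas to the discriminant inequality (\ref{delta})/(\ref{basic}), and pass to the limit along the inward normal, killing the $\Hessian_\rho(L,L)\Hessian_\rho(N,N)$ and $|L\rho|^2/(-\rho)$ terms by comparing orders of vanishing. The only cosmetic difference is that you write $O(|z-z_0|^2)$ where the paper (somewhat loosely) writes $o(|z-z_0|^2)$; your version of the order-of-vanishing comparison is the cleaner one, and your handling of the $\left|\Hessian_\rho(L,N)\right|=0$ case matches what the paper leaves implicit.
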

	
	We are going to prove the first necessary condition for Diederich--Forn\ae ss index. But before that, we define a quantity $C>0$.
	
	We let
	\[C:=\max\lbrace
	\max_{z\in\Sigma}|g(\nabla_N\overline{L}, \overline{N})|,
	\max_{z\in\Sigma}|g(\nabla_{\overline{L}} L, L)|,
	\max_{z\in\Sigma}|g([N, L], N)|,
	\max_{z\in\Sigma}|g(\nabla_{\overline{L}}N, N)|\rbrace.\]
	
	Such a $C$ is a real positive number because $\Sigma$ is compact and $|g(\nabla_N\overline{L}, \overline{N})|$, $|g(\nabla_{\overline{L}} L, L)|$, $|g([N, L], N)|$ and $|g(\nabla_{\overline{L}}N, N)|$ are continuous.
	
	We are now ready to prove the first necessary condition of Diederich--Forn\ae ss index.
	
	\begin{proof}[Proof of Theorem \ref{first condition}]
		We will only show the case that $|\Hessian_\rho(L, N)|\neq 0$ because otherwise, 
		\[\frac{\|\nabla\rho\|}{\left|\Hessian_\rho(L, N)\right|}=\infty,\] which implies that \[ 2.5+3.75C\frac{\|\nabla\rho\|}{\left|\Hessian_{\rho}(N, L)\right|}+0.5\left|L\left(\frac{\|\nabla\rho\|}{\Hessian_{\rho}(N, L)}\right)\right|\geq 3.75C\frac{\|\nabla\rho\|}{\left|\Hessian_\rho(L, N)\right|} \geq\frac{1}{1-\eta},\] for any $\eta$.
		
		Assume that $|\Hessian_\rho(L, N)|\neq 0$ and we observe that firstly
		\[\overline{L}\rho=0 \] on $\partial\Omega$ and $N-\overline{N}$ is a (real) vector field on $\partial\Omega$. This is because  \[(N-\overline{N})\rho=\sqrt{\left|\frac{\partial\rho}{\partial z}\right|^2+\left|\frac{\partial\rho}{\partial w}\right|^2}-\sqrt{\left|\frac{\partial\rho}{\partial z}\right|^2+\left|\frac{\partial\rho}{\partial w}\right|^2}=0.\] Because $N-\overline{N}$ is a vector field on $\partial\Omega$ and $\overline{L}\rho=0$ on $\partial\Omega$,
		\[(N-\overline{N})\overline{L}\rho=0\] on $\partial\Omega$. This means
		\[N(\overline{L}\rho)=\overline{N}(\overline{L}\rho).\]
		
		We calculate that
		\[\lim\limits_{z\to\partial\Omega}\frac{\overline{L}\rho}{-\rho}=\lim\limits_{z\to\partial\Omega}\frac{\overline{L}\rho|_z-\overline{L}\rho|_p}{-\rho(z)+\rho(p)}=\frac{(N+\overline{N})\overline{L}\rho}{-g(\nabla\rho, N+\overline{N})}=\frac{2N(\overline{L}\rho)}{-g(\nabla\rho, N+\overline{N})}=\frac{2\overline{\Hessian_\rho(L, N)}+2(\nabla_N \overline{L})\rho}{-\|\nabla\rho\|},\] where $p\in\partial\Omega$ is the closest point to $z$.
		
		Moreover, for the same reason, on the Levi-flat set $\Sigma$ \[N\Hessian_\rho(L, L)=\overline{N}\Hessian_\rho(L, L)\] and thus
		\[\begin{split}
		&(N+\overline{N})\Hessian_\rho(L, L)=2N\Hessian_\rho(L, L)=2Ng(\nabla_L\nabla\rho, L)\\=&2g(\nabla_L\nabla_N\nabla\rho, L)+2g(\nabla_L\nabla\rho, \nabla_{\overline{N}}L)+2g(\nabla_{[N, L]}\nabla\rho, L)\\=&2Lg(\nabla_N\nabla\rho, L)-2g(\nabla_N\nabla\rho, \nabla_{\overline{L}} L)+2g(\nabla_L\nabla\rho, \nabla_{\overline{N}}L)+2g(\nabla_{[N, L]}\nabla\rho, L).
		\end{split}\]
		
		From the calculation above, we can further get that
		\[\begin{split}
		&\lim\limits_{z\to\partial\Omega}\frac{\Hessian_\rho(L, L)}{-\rho}=\frac{(N+\overline{N})\Hessian_\rho(L, L)}{-(N+\overline{N})\rho}=\frac{2N\Hessian_\rho(L, L)}{-\|\nabla\rho\|}\\=&\frac{Lg(\nabla_N\nabla\rho, L)-g(\nabla_N\nabla\rho, \nabla_{\overline{L}} L)+g(\nabla_L\nabla\rho, \nabla_{\overline{N}}L)+g(\nabla_{[N, L]}\nabla\rho, L)}{-\|\nabla\rho\|}\\&+\frac{\overline{L}g(\nabla_L\nabla\rho, N)-g(\nabla_{\overline{N}}\nabla\rho, \nabla_L \overline{L})+g(\nabla_{\overline{L}}\nabla\rho, \nabla_{N}\overline{L})+g(\nabla_{\overline{[N, L]}}\nabla\rho, \overline{L})}{-\|\nabla\rho\|}\\
		\leq&\frac{2\Re(Lg(\nabla_N\nabla\rho, L))}{-\|\nabla\rho\|}+\frac{2\left|g(\nabla_N\nabla\rho, \nabla_{\overline{L}} L)\right|+2\left|g(\nabla_L\nabla\rho, \nabla_{\overline{N}}L)\right|+2\left|g(\nabla_{[N, L]}\nabla\rho, L)\right|}{\|\nabla\rho\|}.
		\end{split}\]
		
		Moreover, since on $\partial\Omega$, \[N\rho=\frac{e^\psi}{2}=\frac{1}{2}\|\nabla\rho\|\]
		
		\[\begin{split}
		&|N(\rho)|^2\lim\limits_{z\to\partial\Omega}\frac{\Hessian_\rho(L, L)}{-\rho}\\=&\left(-\frac{\|\nabla\rho\|}{2}\right)\left(\Re(Lg(\nabla_N\nabla\rho, L))-\Re(g(\nabla_N\nabla\rho, \nabla_{\overline{L}} L))+\Re(g(\nabla_L\nabla\rho, \nabla_{\overline{N}}L))+\Re(g(\nabla_{[N, L]}\nabla\rho, L))\right)\\
		\leq&-\frac{1}{2}\|\nabla\rho\|\Re\left(Lg(\nabla_N\nabla\rho, L)\right)+\frac{1}{2}\|\nabla\rho\|\left(\left|g(\nabla_N\nabla\rho, \nabla_{\overline{L}} L)\right|+\left|g(\nabla_L\nabla\rho, \nabla_{\overline{N}}L)\right|+\left|g(\nabla_{[N, L]}\nabla\rho, L)\right|\right),
		\end{split}\]
		and			
		\[\begin{split}
		&-2\Re\left(\Hessian_\rho(L, N)\cdot \lim\limits_{z\to\partial\Omega}\frac{\overline{L}\rho}{-\rho}\cdot N(\rho)\right)\\=&2\Re\left(\Hessian_\rho(L, N)\cdot\frac{\overline{\Hessian_\rho(L, N)}+(\nabla_N \overline{L})\rho}{N\rho}\cdot N\rho\right)\\=&2\Re\left(\Hessian_\rho(L, N)\cdot\left(\overline{\Hessian_\rho(L, N)}+(\nabla_N \overline{L})\rho\right)\right)\\=&2|\Hessian_\rho(L, N)|^2+2\Re\left(\Hessian_\rho(L, N)\cdot(\nabla_N \overline{L})\rho\right)\\\leq&2|\Hessian_\rho(L, N)|^2+2\left|\Hessian_\rho(L, N)\cdot(\nabla_N \overline{L})\rho\right|.
		\end{split}\]
		
		It implies that		
		\[\frac{1}{1-\eta}\leq2+\frac{2\frac{\left|\Hessian_\rho(L, N)\right|}{\|\nabla\rho\|}\frac{\left|(\nabla_N \overline{L})\rho\right|}{\|\nabla\rho\|}-\frac{\Re Lg(\nabla_N\nabla\rho, L)}{2\|\nabla\rho\|}+\frac{\left|g(\nabla_N\nabla\rho, \nabla_{\overline{L}} L)\right|}{2\|\nabla\rho\|}+\frac{\left|g(\nabla_L\nabla\rho, \nabla_{\overline{N}}L)\right|}{2\|\nabla\rho\|}+\frac{\left|g(\nabla_{[N, L]}\nabla\rho, L)\right|}{2\|\nabla\rho\|}}{\left(\frac{\left|\Hessian_\rho (L, N)\right|}{\|\nabla\rho\|}\right)^2}\]
		
		Combining the fact that \[0=\Hessian_\rho(L, L)=\overline{L}(L\rho)-(\nabla_{\overline{L}}L)\rho=(\nabla_{\overline{L}}L)\rho\] and the definition of $C$, we obtain that on $\Sigma$, 
		\[\begin{split}
		&\left|(\nabla_N \overline{L})\rho\right|=\left|g(\nabla_N \overline{L}, \nabla\rho)\right|=\left|g(g(\nabla_N \overline{L}, \overline{L})\overline{L}+g(\nabla_N \overline{L}, \overline{N})\overline{N}, \nabla\rho)\right|\leq C\|\nabla\rho\|\\
		&\left|g(\nabla_N\nabla\rho, \nabla_{\overline{L}} L)\right|=\left|g(\nabla_N\nabla\rho, g(\nabla_{\overline{L}} L, L)L)\right|\leq C\left|g(\nabla_N\nabla\rho, L)\right|\\
		&\left|g(\nabla_L\nabla\rho, \nabla_{\overline{N}}L)\right|=\left|g(\nabla_L\nabla\rho, g(\nabla_{\overline{N}}L, N)N+g(\nabla_{\overline{N}}L, L)L)\right|\leq C\left|\Hessian_\rho(L, N)\right|\\
		&\left|g(\nabla_{[N, L]}\nabla\rho, L)\right|=\left|g([N, L], L)g(\nabla_{L}\nabla\rho, L)+g([N, L], N)g(\nabla_{N}\nabla\rho, L)\right|\leq C\left|\Hessian_\rho(L, N)\right|.
		\end{split}\]
		
		Hence,
		\[\frac{1}{1-\eta}\leq2+\frac{3.5C\frac{\left|\Hessian_\rho(L, N)\right|}{\|\nabla\rho\|}-\frac{\Re Lg(\nabla_N\nabla\rho, L)}{2\|\nabla\rho\|}}{\left(\frac{|\Hessian_\rho(L, N)|}{\|\nabla\rho\|}\right)^2}.\]
		
		%
		%
		
		We look into \[\begin{split}
		-\frac{\frac{\Re Lg(\nabla_N\nabla\rho, L)}{\|\nabla\rho\|}}{\left(\frac{|\Hessian_\rho(L, N)|}{\|\nabla\rho\|}\right)^2}&=-\Re \frac{\|\nabla\rho\|Lg(\nabla_N\nabla\rho, L)}{\left|\Hessian_\rho(N, L)\right|^2}\\&\leq \left|\frac{\|\nabla\rho\|Lg(\nabla_N\nabla\rho, L)}{\left(\Hessian_\rho(N, L)\right)^2}\right|\\&\leq \left|\frac{\|\nabla\rho\|Lg(\nabla_N\nabla\rho, L)-(L\|\nabla\rho\|)g(\nabla_N\nabla\rho, L)}{\left(\Hessian_\rho(N, L)\right)^2}\right|+\left|\frac{L\|\nabla\rho\|}{\Hessian_\rho(N, L)}\right|\\&=\left|L\left(\frac{\|\nabla\rho\|}{\Hessian_\rho(N, L)}\right)\right|+\left|\frac{L\|\nabla\rho\|}{\Hessian_\rho(N, L)}\right|.
		\end{split}\]
		
		On $\partial\Omega$
		\[\begin{split}
		\frac{\overline{L}\|\nabla\rho\|}{\Hessian_\rho(N, L)}=\frac{(\overline{L}\psi)(N\delta)+\overline{L}(N\delta)}{(N\delta)(\overline{L}\psi)+\Hessian_\delta(N, L)}=&1+\frac{\overline{L}(N\delta)-\Hessian_\delta(N, L)}{(N\delta)(\overline{L}\psi)+\Hessian_\delta(N, L)}\\=&
		1+\frac{(\nabla_{\overline{L}}N)\delta}{(N\delta)(\overline{L}\psi)+\Hessian_\delta(N, L)}\\=&
		1+\frac{e^\psi(\nabla_{\overline{L}}N)\delta}{e^\psi(N\delta)(\overline{L}\psi)+e^\psi\Hessian_\delta(N, L)},
		\end{split}
		\]			
		which implies,
		\[\left|\frac{\overline{L}\|\nabla\rho\|}{\Hessian_\rho(N, L)}\right|\leq 1+\frac{C\|\nabla\rho\|}{2\left|\Hessian_\rho(N, L)\right|}\]		
		and 			
		\[\frac{1}{1-\eta}\leq 2.5+3.75C\frac{\|\nabla\rho\|}{\left|\Hessian_\rho(N, L)\right|}+0.5\left|L\left(\frac{\|\nabla\rho\|}{\Hessian_\rho(N, L)}\right)\right|.\]
		
		If the Diederich--Forn\ae ss index is $\eta_0$, the inequality above holds for any $\eta<\eta_0$. Hence we can find a sequence of smooth defining functions $\rho_j$ such that \[ 2.5+3.75C\frac{\|\nabla\rho_j\|}{\left|\Hessian_{\rho_j}(N, L)\right|}+0.5\left|L\left(\frac{\|\nabla\rho_j\|}{\Hessian_{\rho_j}(N, L)}\right)\right|\rightarrow \frac{1}{1-\eta_0},\] uniformly as $j\rightarrow\infty$.
		
		In case that the Diederich--Forn\ae ss index is 1, by the preceding inequality, we have that 
		\[\frac{\|\nabla\rho_j\|}{\left|\Hessian_{\rho_j}(N, L)\right|}+\left|L\left(\frac{\|\nabla\rho_j\|}{\Hessian_{\rho_j}(N, L)}\right)\right|\rightarrow\infty,\] uniformly as $j\rightarrow\infty$.
	\end{proof}
	
	We let $\rho=\delta e^\psi$. After a tedious computation we find that on $\partial\Omega$,

	\[\begin{split}
	&\frac{1}{2}L\left(\frac{\|\nabla\rho\|}{\Hessian_{\rho}(N, L)}\right)\\=&\frac{\left(LN\delta\right)\left((N\delta)(\overline{L}\psi)+g(\nabla_N\nabla\delta, L)\right)-\left(N\delta\right)\left((LN\delta)(\overline{L}\psi)+(N\delta)(L\overline{L}\psi)+L(g(\nabla_N\nabla\delta, L))\right)}{\left((N\delta)(\overline{L}\psi)+g(\nabla_N\nabla\delta, L)\right)^2}\\
	=&\frac{\left(LN\delta\right)g(\nabla_N\nabla\delta, L)-\left(N\delta\right)\left((N\delta)(L\overline{L}\psi)+L(g(\nabla_N\nabla\delta, L))\right)}{\left((N\delta)(\overline{L}\psi)+g(\nabla_N\nabla\delta, L)\right)^2}
	\end{split}\]
	
	Since $N\delta=\frac{1}{2}\|\nabla\delta\|=\frac{1}{2}$, $L(N\delta)=0$ and the equality above can be written as
	
	\[\begin{split}
	L\left(\frac{\|\nabla\rho\|}{\Hessian_{\rho}(N, L)}\right)&=\frac{-\left(\frac{1}{2}L\overline{L}\psi+L(g(\nabla_N\nabla\delta, L))\right)}{\left(\frac{1}{2}\overline{L}\psi+g(\nabla_N\nabla\delta, L)\right)^2}\\&=L\left(\frac{1}{\frac{1}{2}\overline{L}\psi+g(\nabla_N\nabla\delta, L)}\right)
	\end{split}
	\]
	
	We now compute 
	\[\frac{\|\nabla\rho\|}{\Hessian_{\rho}(N, L)}=\frac{1}{\frac{1}{2}\overline{L}\psi+g(\nabla_N\nabla\delta, L)}\]
	and 
	\[L\left(\frac{1}{\frac{1}{2}\overline{L}\psi+g(\nabla_N\nabla\delta, L)}\right)=-\frac{\frac{1}{2}L\overline{L}\psi+Lg(\nabla_N\nabla\delta, L)}{\left(\frac{1}{2}\overline{L}\psi+g(\nabla_N\nabla\delta, L)\right)^2}.\]
	
	Based on the simplification of the torsion $\frac{\|\nabla\rho\|}{\Hessian_{\rho}(N, L)}$, we derive the following corollary.
	
	\begin{corollary}
		Let $\Omega$ be a bounded pseudoconvex domain with smooth boundary in $\mathbb{C}^2$ and $\Sigma$ be the Levi-flat set of $\partial\Omega$. If the Diederich--Forn\ae ss index is $\eta_0$, then for any $\eta<\eta_0$, there exists a smooth functions $\psi$ such that \[ 2.5+\frac{3.75C}{\left|\frac{1}{2}\overline{L}\psi+g(\nabla_N\nabla\delta, L)\right|}+0.5\left|L\left(\frac{1}{\frac{1}{2}\overline{L}\psi+g(\nabla_N\nabla\delta, L)}\right)\right|\geq \frac{1}{1-\eta}.\]
		
		Particularly, if the Diederich--Forn\ae ss index is 1, then there exists a sequence of smooth functions $\psi_j$ so that \[\frac{1}{\frac{1}{2}\overline{L}\psi_j+g(\nabla_N\nabla\delta, L)}\] diverges to $\infty$ on $\Sigma$ in the holomorphic $C^1$ norm.
	\end{corollary}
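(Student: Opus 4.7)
The plan is to derive this corollary as a direct consequence of Theorem \ref{first condition} combined with the explicit formulas for $\frac{\|\nabla\rho\|}{\Hessian_\rho(N,L)}$ and $L\!\left(\frac{\|\nabla\rho\|}{\Hessian_\rho(N,L)}\right)$ on $\partial\Omega$ that were just established immediately above the corollary. The key observation is that the map $\psi \mapsto \rho := \delta e^\psi$ gives a bijection between smooth real functions $\psi$ defined in a neighborhood of $\partial\Omega$ and smooth defining functions $\rho$ of $\Omega$; this allows us to pass between the ``$\rho$-formulation'' of the first necessary condition and the ``$\psi$-formulation'' stated in the corollary.

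For the first assertion, I would start from Theorem \ref{first condition}: given $\eta < \eta_0$, pick the defining function $\rho$ whose existence is guaranteed there, and let $\psi$ be the corresponding function defined by $\rho = \delta e^\psi$. Then plug the two identities
\[
\frac{\|\nabla\rho\|}{\Hessian_{\rho}(N, L)}=\frac{1}{\tfrac{1}{2}\overline{L}\psi+g(\nabla_N\nabla\delta, L)}
\qquad\text{and}\qquad
L\!\left(\frac{\|\nabla\rho\|}{\Hessian_{\rho}(N, L)}\right)=L\!\left(\frac{1}{\tfrac{1}{2}\overline{L}\psi+g(\nabla_N\nabla\delta, L)}\right)
\]
(both derived on $\partial\Omega$, and hence a fortiori on $\Sigma\subset\partial\Omega$) directly into the inequality
\[
2.5+3.75C\,\frac{\|\nabla\rho\|}{|\Hessian_\rho(N,L)|}+0.5\left|L\!\left(\frac{\|\nabla\rho\|}{\Hessian_\rho(N,L)}\right)\right|\geq \frac{1}{1-\eta}
\]
provided by Theorem \ref{first condition}. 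This substitution yields exactly the stated inequality. One should note the case $\Hessian_\rho(N,L)=0$, which corresponds to the denominator $\tfrac{1}{2}\overline{L}\psi+g(\nabla_N\nabla\delta, L)$ vanishing; in that case the right-hand side of the stated inequality is $+\infty$, making the inequality trivially true, so no separate argument is needed.

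For the ``particularly'' part (Diederich--Forn\ae ss index equal to $1$), the plan is to invoke the second half of Theorem \ref{first condition}, which produces a sequence $\rho_j = \delta e^{\psi_j}$ with $\frac{\|\nabla \rho_j\|}{\Hessian_{\rho_j}(N,L)}\to\infty$ in the holomorphic $C^1$ norm on $\Sigma$. Using the same two identities to rewrite both $\|\cdot\|_\infty$ and $\|L(\cdot)\|_\infty$ in terms of $\psi_j$, the divergence transfers verbatim to the quantity $\tfrac{1}{\tfrac{1}{2}\overline{L}\psi_j+g(\nabla_N\nabla\delta, L)}$. The only mild subtlety, and hence the only thing that requires attention, is that the two identities above are stated on $\partial\Omega$, so one must make sure that the values and $L$-derivatives of the torsion that appear in the definition of $\|\cdot\|_{C^1,L}$ are indeed boundary values; this is automatic because the norm is taken on $\Sigma\subset\partial\Omega$. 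There is no substantive new estimate to prove, so the main obstacle is purely bookkeeping: tracking which formula holds on $\Sigma$ versus in the interior, and handling the edge case where the denominator vanishes.
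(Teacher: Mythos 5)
Your proposal is correct and matches the paper's (largely implicit) argument: the paper derives the corollary precisely by substituting the boundary identities $\frac{\|\nabla\rho\|}{\Hessian_{\rho}(N, L)}=\frac{1}{\frac{1}{2}\overline{L}\psi+g(\nabla_N\nabla\delta, L)}$ and the corresponding formula for the $L$-derivative into Theorem \ref{first condition}, using the correspondence $\rho=\delta e^{\psi}$. Your additional remarks on the vanishing-denominator edge case and on the norm being taken on $\Sigma\subset\partial\Omega$ are consistent with how the paper handles these points.
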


	\section{The second necessary condition of Diederich--Forn\ae ss index}\label{2nd}

	\begin{proof}[Proof of Theorem \ref{second condition}]
		Starting from Theorem \ref{rough},
		\[\frac{1}{1-\eta}\leq\frac{-2\Re\left(\Hessian_\rho (L, N)\cdot\frac{\overline{L}\rho}{-\rho}\cdot N\rho\right)+\frac{\Hessian_\rho (L, L)}{-\rho}\left|N(\rho)\right|^2}{\left|\Hessian_\rho (L, N)\right|^2}.\]
		Recall that $N\rho=\overline{N}\rho$ implies $N\rho=\frac{1}{2}\|\nabla\rho\|$ and \[N(\overline{L}\rho)-(\nabla_N\overline{L})\rho=\Hessian_\rho(N, L).\] As a consequence,
		\[
		-2\Re\left(\Hessian_\rho(L, N)\cdot \lim\limits_{z\to\partial\Omega}\frac{\overline{L}\rho}{-\rho}\cdot N\rho\right)=2|\Hessian_\rho(L, N)|^2+2\Re\left(\Hessian_\rho(L, N)\cdot(\nabla_N \overline{L})\rho\right).
		\]
		Here, we use the computation \[-\lim\limits_{z\to\partial\Omega}\frac{\overline{L}\rho}{-\rho}=\lim\limits_{z\to\partial\Omega}\frac{\overline{L}\rho\vert_z-\overline{L}\rho\vert_{z_0}}{\rho\vert_z-\rho\vert_{z_0}}=\frac{(N+\overline{N})\overline{L}\rho}{(N+\overline{N})\rho}\bigg|_{z_0}=\frac{N(\overline{L}\rho)}{N\rho}\bigg|_{z_0},\] where $z_0$ is the boundary point such that $z_0-z$ is normal to $\partial\Omega$.
		Observe that on $\partial\Omega$
		\[(\nabla_N\overline{L})\rho=e^\psi(\nabla_N\overline{L})\delta=e^\psi(-\Hessian_\delta(N, L)+N(\overline{L}\delta))=-e^\psi\Hessian_\delta(N, L).\]
		This is because that $L\delta=\overline{L}\delta=0$ at all points in $\overline{\Omega}$, not only at the boundary. Moreover, \[\begin{split}
		\Hessian_\rho(L, N)=&g(\nabla_L\nabla\rho, N)=g(\nabla_L\nabla(\delta e^\psi), N)\\=&e^\psi L(\delta)\overline{N}\psi+\delta g(\nabla_L\nabla{e^\psi}, N)+L(e^\psi)\overline{N}\delta+e^\psi g(\nabla_L\nabla\delta,N)\\=&\delta g(\nabla_L\nabla{e^\psi}, N)+e^\psi L(\psi)\overline{N}\delta+e^\psi g(\nabla_L\nabla\delta,N).
		\end{split}\]
		and on $\partial\Omega$, we have that \[\Hessian_\rho(L, N)=e^\psi L(\psi)\overline{N}\delta+e^\psi g(\nabla_L\nabla\delta,N).\]
		Hence, 	
		\[\begin{split}
		2\Re\left(\Hessian_\rho(L, N)\cdot(\nabla_N \overline{L})\rho\right)=&-2\Re\left(e^{2\psi}\left((L\psi)(\overline{N}\delta)+\Hessian_\delta(L, N)\right)\left(\Hessian_\delta(N, L)\right)\right)\\=&-2e^{2\psi}\Re\left((L\psi)(\overline{N}\delta)\Hessian_\delta(N, L)\right)-2e^{2\psi}\left|\Hessian_\delta(L, N)\right|^2
		\end{split}\]
		
		We obtain,
		
		\[\begin{split}
		\frac{1}{1-\eta}-2\leq&\frac{2\Re\left(\Hessian_\rho(L, N)\cdot(\nabla_N \overline{L})\rho\right)+\frac{\Hessian_\rho (L, L)}{-\rho}\left|N(\rho)\right|^2}{\left|\Hessian_\rho (L, N)\right|^2}\\
		=&\frac{-2e^{2\psi}\Re\left((L\psi)(\overline{N}\delta)\Hessian_\delta(N, L)\right)-2e^{2\psi}\left|\Hessian_\delta(L, N)\right|^2}{\left|\Hessian_\rho (L, N)\right|^2}\\&-\frac{\|\nabla\rho\|\Re(Lg(\nabla_N\nabla\rho, L))}{2\left|\Hessian_\rho (L, N)\right|^2}\\&-\frac{\|\nabla\rho\|\Re\left(-g(\nabla_N\nabla\rho, \nabla_{\overline{L}} L)+g(\nabla_L\nabla\rho, \nabla_{\overline{N}}L)+g(\nabla_{[N, L]}\nabla\rho, L)\right)}{2\left|\Hessian_\rho (L, N)\right|^2}\\=&\frac{-2e^{2\psi}\Re\left((L\psi)(\overline{N}\delta)\Hessian_\delta(N, L)\right)-2e^{2\psi}\left|\Hessian_\delta(L, N)\right|^2}{\left|\Hessian_\rho (L, N)\right|^2}\\&-\frac{\|\nabla\rho\|\Re(Lg(\nabla_N\nabla\rho, L))}{2\left|\Hessian_\rho (L, N)\right|^2}\\&-\frac{\|\nabla\rho\|\Re\left(-g(\nabla_N\nabla\rho, \nabla_{\overline{L}} L)+g(\nabla_{[N, L]}\nabla\rho, L)\right)}{2\left|\Hessian_\rho (L, N)\right|^2}\\&-\frac{\|\nabla\rho\|\Re\left(g(\nabla_L\nabla\rho, \nabla_{\overline{N}}L)\right)}{2\left|\Hessian_\rho (L, N)\right|^2}
		\end{split}.\]
		
		We observe that \[-\Hessian_\delta(L, N)=(\nabla_{\overline{N}}L)\delta=g(\nabla_{\overline{N}}L, \sqrt{2}N)(\sqrt{2}N\delta)+g(\nabla_{\overline{N}}L, \sqrt{2}L)(\sqrt{2}L\delta)=g(\nabla_{\overline{N}}L, N),\]
		where $2N\delta=\|\nabla\delta\|=1$ because that $\delta$ is a distance function. Hence the last term is 
		\[\frac{\|\nabla\rho\|\Re\left(g(\nabla_L\nabla\rho, \sqrt{2}N)\sqrt{2}\Hessian_\delta(N, L)\right)}{2\left|\Hessian_\rho (N, L)\right|^2}=\frac{e^{2\psi}\left|\Hessian_\delta(L, N)\right|^2+e^{2\psi}\Re\left((L\psi)(\overline{N}\delta)\Hessian_\delta(N, L)\right)}{\left|\Hessian_\rho (N, L)\right|^2}.\]
		
		Thus, we have that,
		
		\[\begin{split}
		\frac{1}{1-\eta}-2\leq&\frac{-e^{2\psi}\Re\left((L\psi)(\overline{N}\delta)\Hessian_\delta(N, L)\right)-e^{2\psi}\left|\Hessian_\delta(L, N)\right|^2}{\left|\Hessian_\rho (L, N)\right|^2}\\&-\frac{e^\psi\Re(Lg(\nabla_N\nabla\rho, L))}{2\left|\Hessian_\rho (L, N)\right|^2}\\&-\frac{e^\psi\Re\left(-g(\nabla_N\nabla\rho, \nabla_{\overline{L}} L)+g(\nabla_{[N, L]}\nabla\rho, L)\right)}{2\left|\Hessian_\rho (L, N)\right|^2}\\=&\frac{-\Re\left(\frac{1}{2}(L\psi)\Hessian_\delta(N, L)\right)-\left|\Hessian_\delta(L, N)\right|^2}{\left|\frac{1}{2}\overline{L}\psi+\Hessian_\delta(N, L)\right|^2}\\&-\frac{\frac{1}{4}|L\psi|^2+\frac{1}{4}\Re (L\overline{L}\psi)+\frac{1}{2}\Re((L\psi)\Hessian_\delta(N,L)+\frac{1}{2}L\Hessian_\delta(N,L))}{\left|\frac{1}{2}\overline{L}\psi+\Hessian_\delta (L, N)\right|^2}\\&-\frac{e^\psi\Re\left(-g(\nabla_N\nabla\rho, \nabla_{\overline{L}} L)+g(\nabla_{[N, L]}\nabla\rho, L)\right)}{2\left|\Hessian_\rho (L, N)\right|^2}\\\leq&\frac{C_1}{\left|\frac{1}{2}\overline{L}\psi+\Hessian_\delta(N,L)\right|}+\frac{-\frac{1}{4}|L\psi|^2-\Re\left((L\psi)\Hessian_\delta(N, L)\right)-\left|\Hessian_\delta(L, N)\right|^2}{\left|\frac{1}{2}\overline{L}\psi+\Hessian_\delta(N, L)\right|^2}\\&+\frac{-\frac{1}{4}\Re (L\overline{L}\psi)-\frac{1}{2}\Re(L\Hessian_\delta(N,L))}{\left|\frac{1}{2}\overline{L}\psi+\Hessian_\delta (L, N)\right|^2}+\frac{\Re\left(\frac{1}{4}\nabla_{\overline{L}}L(\psi)+\frac{1}{2}\Hessian_\delta(N, \nabla_{\overline{L}}L)\right)}{\left|\frac{1}{2}\overline{L}\psi+\Hessian_\delta (L, N)\right|^2}\\=&\frac{C_1}{\left|\frac{1}{2}\overline{L}\psi+\Hessian_\delta(N,L)\right|}-1-\frac{\frac{1}{4}\Hessian_\psi(L, L)}{\left|\frac{1}{2}\overline{L}\psi+\Hessian_\delta (L, N)\right|^2}-\frac{\frac{1}{2}\Re(L\Hessian_\delta(N,L)-\Hessian_\delta(N, \nabla_{\overline{L}}L))}{\left|\frac{1}{2}\overline{L}\psi+\Hessian_\delta (L, N)\right|^2}.
		\end{split}\]
		
		This implies
		\[\frac{1}{1-\eta}-1\leq\frac{C_2-\frac{1}{4}\Hessian_\psi(L, L)}{|\frac{1}{2}\overline{L}\psi+\Hessian_\delta(N, L)|^2}+\frac{C_1}{|\frac{1}{2}\overline{L}\psi+\Hessian_\delta(N, L)|},\]
		where \[\label{C1}
		C_1:=2C+2\max_{\Sigma}|\Hessian_\delta(N, L)|\]
		and \[\label{C2}
		C_2:=\frac{1}{2}\max_{\Sigma}\left(\Re(-L\Hessian_\delta(N,L)+\Hessian_\delta(N, \nabla_{\overline{L}}L))\right).\]
	\end{proof}

	We can also refine the second necessary condition above in order to obtain the bounds of $|\frac{1}{2}\overline{L}\psi+\Hessian_\delta(N, L)|$. More precisely, we have the improved second necessary condition. We are ready to prove it.

	\begin{proof}[Proof of Theorem \ref{improv2nd}]
		We look at \[\frac{1}{1-\eta}-1\leq\frac{C_2-\frac{1}{4}\Hessian_\psi(L, L)}{|\frac{1}{2}\overline{L}\psi+\Hessian_\delta(N, L)|^2}+\frac{C_1}{|\frac{1}{2}\overline{L}\psi+\Hessian_\delta(N, L)|}\]
		
		If the Diederich--Forn\ae ss index is 1, the condition becomes \[\frac{C_2-\frac{1}{4}\Hessian_{\psi_n}(L, L)}{|\frac{1}{2}\overline{L}\psi_n+\Hessian_\delta(N, L)|^2}+\frac{C_1}{|\frac{1}{2}\overline{L}\psi_n+\Hessian_\delta(N, L)|}\rightarrow\infty\] as $n\rightarrow\infty$. In other words, we can find a subsequence $\psi_n$ so that \[\frac{C_2-\frac{1}{4}\Hessian_{\psi_n}(L, L)}{|\frac{1}{2}\overline{L}\psi_n+\Hessian_\delta(N, L)|^2}+\frac{C_1}{|\frac{1}{2}\overline{L}\psi_n+\Hessian_\delta(N, L)|}>n\] for all $n\in\mathbb{N}$. The preceding inequality is equivalent to 
		\[n\left|\frac{1}{2}\overline{L}\psi_n+g(\nabla_N\nabla\delta, L)\right|^2-C_1\left|\frac{1}{2}\overline{L}\psi_n+g(\nabla_N\nabla\delta, L)\right|+\frac{1}{4}\Hessian_{\psi_n}(L, L)-C_2<0\] for all $n\in\mathbb{N}$ which implies,
		\[C_1^2+n\left(4C_2-\Hessian_{\psi_n}(L, L)\right)\geq 0\] and
		\[0\leq \left|\frac{1}{2}\overline{L}\psi_n+g(\nabla_N\nabla\delta, L)\right|<\frac{C_1+\sqrt{C_1^2+n\left(4C_2-\Hessian_{\psi_n}(L, L)\right)}}{2n}\]for all $n\in\mathbb{N}$. This inequality also includes the case that \[\left|\frac{1}{2}\overline{L}\psi_n+g(\nabla_N\nabla\delta, L)\right|=0.\] 
		
		Hence, we obtain \begin{equation}\label{upper}\begin{split}
		\left|\frac{1}{2}\overline{L}\psi_n+g(\nabla_N\nabla\delta, L)\right|&<\frac{C_1+\sqrt{n}\sqrt{\frac{C_1^2}{n}+\left(4C_2-\Hessian_{\psi_n}(L, L)\right)}}{2n}\\&\leq\frac{C_1+\sqrt{n}(1+\frac{C_1^2}{n}+\left(4C_2-\Hessian_{\psi_n}(L, L)\right))}{2n}.
		\end{split}		
		\end{equation}
		For the last inequality, we used the Cauchy-Schwarz inequality.
	\end{proof}

	\section{An application of the second necessary condition}

	From the proof of the second necessary condition, the reader can see that $L$ can be any holomorphic vector field with a constant norm.

	\begin{proof}[Proof of Theorem \ref{app}]
		Firstly, since $L$ is a unit holomorphic tangent vector on $S$, we observe that $\Hessian_{\psi}(L, L)=\Delta\psi$ for any smooth function $\psi$ defined on $S$. Here $\Delta$ is defined with induced Euclidean metric from $\mathbb{C}^2$. 
		
		We assume the Diederich--Forn\ae ss index is 1. By (\ref{upper}), we integrate both sides over the closed Riemann surface $\Sigma$ and have that \[\int_{\Sigma}\left|\frac{1}{2}\overline{L}\psi_n+g(\nabla_N\nabla\delta, L)\right|\, dS\leq\int_{\Sigma}\frac{C_1+\sqrt{n}(1+\frac{C_1^2}{n}+\left(4C_2-\Delta\psi_n\right))}{2n}\,dS,\] where $\,dS$ is the surface element. Since $S$ has no boundary, by Stokes theorem, we have that 
		\[\int_{\Sigma}\Delta\psi_n\,dS=0.\] This gives that 
		\[\begin{split}
		\int_{\Sigma}\left|\frac{1}{2}\overline{L}\psi_n+g(\nabla_N\nabla\delta, L)\right|\, dS\leq&\int_{\Sigma}\frac{C_1+\sqrt{n}(1+\frac{C_1^2}{n}+4C_2)}{2n}\,dS-\int_{\Sigma}\frac{\Delta\psi_n}{2\sqrt{n}}\,dS\\=&\int_{\Sigma}\frac{C_1+\sqrt{n}(1+\frac{C_1^2}{n}+4C_2)}{2n}\,dS\\=&\text{Area}(\Sigma)\left(\frac{C_1+\sqrt{n}(1+\frac{C_1^2}{n}+4C_2)}{2n}\right).
		\end{split}\] 
		Let $n$ goes to $\infty$ we have that there exist a sequence $\psi_n$ so that 
		\[\int_{\Sigma}\left|\frac{1}{2}\overline{L}\psi_n+g(\nabla_N\nabla\delta, L)\right|\, dS\rightarrow 0.\]
		This completes the proof.
	\end{proof}

	\bigskip
	\bigskip
	\noindent {\bf Acknowledgments}. The author thanks to Dr. Anne-Katrin Gallagher for her patience and instruction on improving this paper. The author also thanks to Dr. Steven Krantz for his nice comments. The author also thank to Dr. Marco Peloso, Dr. Bun Wong, Dr. Yuan Yuan, Dr. Qi S. Zhang, Xinghong Pan, Dr. Lihan Wang and Dr. Meng Zhu for fruitful conversations. 
	
	\printbibliography
	
\end{document}